\let\originalleft\left
\let\originalright\right
\renewcommand{\left}{\mathopen{}\mathclose\bgroup\originalleft}
\renewcommand{\right}{\aftergroup\egroup\originalright}
\newcommand{\doroverline}[2]{\overline{#1#2}}
\newcommand{\roverline}[1]{\mathpalette\doroverline{#1}}
\newcommand{\lineSeg}[2]{\roverline{#1 #2}}
\newlist{romanlist}{enumerate}{3}
\setlist[romanlist]{label=\roman*),ref=(\roman*)}
\begin{document}

\newcommand{\cR}{\mathcal{R}}
\newcommand{\cU}{\mathcal{U}}
\newcommand{\cV}{\mathcal{V}}
\newcommand{\ee}{\varepsilon}
\newcommand{\rD}{{\rm D}}

\newcommand{\removableFootnote}[1]{\footnote{#1}}

\newtheorem{theorem}{Theorem}[section]
\newtheorem{corollary}[theorem]{Corollary}
\newtheorem{lemma}[theorem]{Lemma}
\newtheorem{proposition}[theorem]{Proposition}

\theoremstyle{definition}
\newtheorem{definition}{Definition}[section]
\newtheorem{example}[definition]{Example}

\theoremstyle{remark}
\newtheorem{remark}{Remark}[section]



\title{
Robust Devaney chaos in the two-dimensional border-collision normal form.
}
\author{
I.~Ghosh and
D.J.W.~Simpson\\\\
School of Fundamental Sciences\\
Massey University\\
Palmerston North\\
New Zealand
}
\maketitle


\begin{abstract}

The collection of all non-degenerate, continuous, two-piece, piecewise-linear maps on $\mathbb{R}^2$
can be reduced to a four-parameter family known as the two-dimensional border-collision normal form.
We prove that throughout an open region of parameter space this family has an attractor satisfying Devaney's definition of chaos.
This strengthens existing results on the robustness of chaos in piecewise-linear maps.
We further show that the stable manifold of a saddle fixed point, despite being a one-dimensional object,
densely fills an open region containing the attractor.
Finally we identify a heteroclinic bifurcation, not described previously,
at which the attractor undergoes a crisis and may be destroyed.

\end{abstract}

\section{Introduction}
\label{sec:intro}
\setcounter{equation}{0}

Robust chaos refers to the phenomenon that a family of dynamical systems
has a chaotic attractor throughout an open region of parameter space \cite{ZeSp12}.
This does not occur for generic families of smooth one-dimensional maps as these have dense windows of periodicity \cite{Va10},
but is typical for systems with sufficiently many dimensions \cite{GoGo18,GoKa21},
a well-known example being the Lorenz system \cite{GuWi79,Tu99}.
One can impose further requirements on the robustness,
such as that the attractor
varies continuously with respect to Hausdorff distance \cite{GlSi20b}
or Lebesgue measure \cite{AlPu17}.

Robust chaos was popularised by Banerjee, Yorke, and Grebogi in \cite{BaYo98}
where they studied the four-parameter family of maps
\begin{equation}
f_\xi(x,y) = \begin{cases}
\begin{bmatrix} \tau_L x + y + 1 \\ -\delta_L x \end{bmatrix}, & x \le 0, \\
\begin{bmatrix} \tau_R x + y + 1 \\ -\delta_R x \end{bmatrix}, & x \ge 0,
\end{cases}
\label{eq:f}
\end{equation}
where, for convenience, we write
\begin{equation}
\xi = \left( \tau_L, \delta_L, \tau_R, \delta_R \right).
\nonumber
\end{equation}
This family is the two-dimensional border-collision normal form
except the border-collision bifurcation parameter, usually denoted $\mu$, has been scaled to $1$.
It describes the dynamics created in {\em border-collision bifurcations} --- where a fixed point of a piecewise-smooth map
collides with a switching manifold \cite{DiBu08,Si16}.
In this context, piecewise-linear maps have been used to explain bifurcations in diverse applications such as
power converters \cite{ZhMo08b}
and mechanical systems with stick/slip friction \cite{SzOs08}.
The family \eqref{eq:f} is a normal form in the sense that any non-degenerate, continuous, two-piece, piecewise-linear map
on $\mathbb{R}^2$ can be converted to \eqref{eq:f} via an affine change of coordinates \cite{NuYo92}.

Banerjee {\em et.~al.}~\cite{BaYo98} identified an open parameter region $\Phi_{\rm BYG} \subset \mathbb{R}^4$,
defined below, throughout which $f_\xi$ exhibits robust chaos.
Recently in \cite{GhSi21} we used renormalisation to partition a region slightly larger than $\Phi_{\rm BYG}$
into subregions $\cR_0, \cR_1, \cR_2, \ldots$.
We showed that if $\xi \in \cR_n$ for some $n \ge 0$,
then $f_\xi$ has a chaotic attractor with $2^n$ connected components.
Fig.~\ref{fig:zd_Lambda} shows the attractor for typical $\xi \in \cR_0$ where it has one connected component
and is the closure of the unstable manifold of a saddle fixed point $X$.
We were further able to show that if $\xi \in \cR_n$ for some $n \ge 1$,
then $f_\xi^{2^n}$ is conjugate to $f_\eta$ for some $\eta \in \cR_0$.
In this way anything one can prove about the dynamics in $\cR_0$ immediately extends to every $\cR_n$ with $n \ge 1$.
For this reason it is helpful to better understand the dynamics in $\cR_0$.

\begin{figure}[t!]
\begin{center}
\includegraphics[width=8cm]{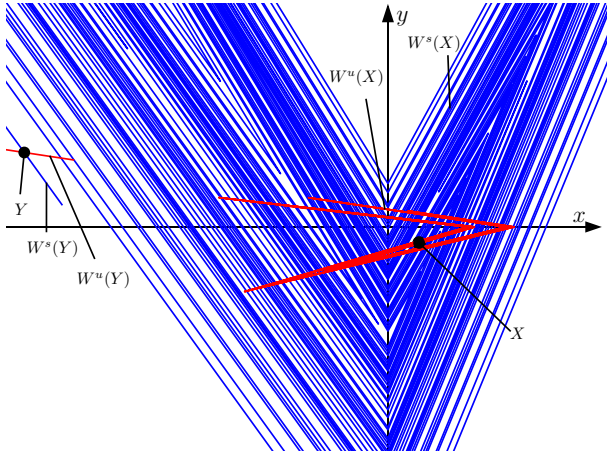}
\caption{
A phase portrait of \eqref{eq:f} with $\xi = \left( 1.5, 0.2, -2, 0.5 \right)$
which corresponds a point in $\cR_0$, see Fig.~\ref{fig:zd_Slices}(a).
There are two saddle fixed points, $X$ and $Y$.
The figure shows numerical computations of the stable (blue) and unstable (red) manifolds of $X$.
The closure of the unstable manifold of $X$, denoted $\Lambda$, is a chaotic attractor;
by Theorem \ref{th:DevaneyChaos} the map exhibits Devaney chaos on $\Lambda$.
The stable manifold $W^s(X)$ can be `grown' by iterating a certain line segment $\lineSeg{f_\xi(V)}{f_\xi^{-1}(V)}$
backwards under $f_\xi$, see \S\ref{sec:XY}, and this figure shows this line segment and its subsequent eight preimages.
By Theorem \ref{th:stableManifoldDense}, $W^s(X)$ is dense in a region containing $\Lambda$.
\label{fig:zd_Lambda}
} 
\end{center}
\end{figure}

Already it is known that $f_\xi$ has a chaotic attractor for all $\xi \in \cR_0$,
but only in the sense of a positive Lyapunov exponent.
In this paper we extend a result of \cite{GlSi21} and show that throughout
a relatively large subset of $\cR_0$ the attractor satisfies Devaney's definition of chaos:
transitivity, dense periodic orbits, and sensitive dependence on initial conditions \cite{De89}.
By the renormalisation discussed above, these properties also hold in the corresponding subsets of $\cR_n$ with $n \ge 1$.

The key objects that we employ to verify Devaney chaos are {\em invariant expanding cones}.
These allow us to obtain a lower bound for the rate at which line segments grow
under iteration by either piece of $f_\xi$.
Indeed such cones have been used to verify
transitivity in other families of piecewise-linear maps \cite{Mi80,Sa99b}.

Below we also show that the stable manifold of $X$ is dense in an open region containing the attractor.
This is illustrated in Fig.~\ref{fig:zd_Lambda} where we have numerically grown $W^s(X)$ outwards for eight iterations
beyond its first kink with the switching manifold, $x=0$.
Already we can see the manifold is leaving only small gaps;
by iterating further we have observed that the size of the gaps steadily reduces further.
Thus $W^s(X)$ behaves like a two-dimensional object, when really it is one-dimensional.
Such a manifold is termed a {\em blender} and for smooth invertible maps only occurs for maps that are
at least three-dimensional \cite{HiKr18}.
Blenders are a central feature of hetero-dimensional cycles and useful for explaining
the breakdown of uniform hyperbolicity \cite{BoDi05}.

The remainder of this paper is organised as follows.
The main results are presented in \S\ref{sec:results}.
Here we also show that the chaotic attractor can persist
beyond $\Phi_{\rm BYG}$ and even outside $\cR_1$.
It appears that if the attractor is not destroyed at the curved boundary of $\Phi_{\rm BYG}$,
then it is destroyed in a heteroclinic bifurcation where the unstable manifold of $X$
develops an intersection with the stable manifold of a period-three solution.

In \S\ref{sec:XY} we introduce essential features of the phase space of $f_\xi$,
then in \S\ref{sec:invariantCone} identify invariant expanding cones on the tangent space of $f_\xi$.
We put these together in \S\ref{sec:lineSegments} to prove statements about
how line segments map under $f_\xi$.
In \S\ref{sec:fInverse} we establish related results for the inverse of $f_\xi$,
then in \S\ref{sec:Devaney} prove transitivity and the denseness of periodic solutions.
Sensitive dependence follows immediately as it is actually a redundant aspect of Devaney's definition \cite{BaBr92}.
Concluding remarks are provided in \S\ref{sec:conc}.

\section{Main results}
\label{sec:results}
\setcounter{equation}{0}

Let
\begin{equation}
\Phi = \left\{ \xi \in \mathbb{R}^4 \,\middle|\, \tau_L > \delta_L + 1, \,\delta_L > 0, \,\tau_R < -(\delta_R + 1), \,\delta_R > 0 \right\}.
\label{eq:saddleSaddleRegion}
\end{equation}
For all $\xi \in \Phi$, the map $f_\xi$ has two fixed points
\begin{align}
X &= \left( \frac{1}{\delta_R + 1 - \tau_R}, \frac{-\delta_R}{\delta_R + 1 - \tau_R} \right), \label{eq:X} \\
Y &= \left( \frac{-1}{\tau_L - \delta_L - 1}, \frac{\delta_L}{\tau_L - \delta_L - 1} \right), \label{eq:Y} 
\end{align}
where $X$ belongs to the right half-plane ($x > 0$)
and $Y$ belongs to the left half-plane ($x < 0$), Fig.~\ref{fig:zd_Lambda}.
Let
\begin{align}
A_L &= \begin{bmatrix} \tau_L & 1 \\ -\delta_L & 0 \end{bmatrix}, &
A_R &= \begin{bmatrix} \tau_R & 1 \\ -\delta_R & 0 \end{bmatrix},
\nonumber
\end{align}
denote the Jacobian matrices of $f_\xi$ in the left and right half-planes.
For all $\xi \in \Phi$,
the eigenvalues of $A_L$ satisfy $0 < \lambda_L^s < 1 < \lambda_L^u$
and the eigenvalues of $A_R$ satisfy $\lambda_R^u < -1 < \lambda_R^s < 0$.
Both $X$ and $Y$ are saddles --- their stable and unstable manifolds are one-dimensional.
In particular, let
\begin{equation}
\Lambda = {\rm cl}(W^u(X))
\label{eq:Lambda}
\end{equation}
denote the closure of the unstable manifold of $X$.

\subsection{A division of parameter space}

The robust chaos parameter region of Banerjee, Yorke, and Grebogi \cite{BaYo98} is the set
\begin{equation}
\Phi_{\rm BYG} = \left\{ \xi \in \Phi \,\middle|\, \phi(\xi) > 0 \right\},
\label{eq:BYGRegion}
\end{equation}
where
\begin{equation}
\phi(\xi) = \delta_R - \left( \tau_R + \delta_L + \delta_R - (1+\tau_R) \lambda_L^u \right) \lambda_L^u.
\label{eq:phi}
\end{equation}
Banerjee {\em et.~al.}~observed that $\Lambda$ may be a chaotic attractor
that can be destroyed when $\phi(\xi) = 0$, as this is
where the stable and unstable manifolds of $Y$ develop an intersection.
This bifurcation is a {\em homoclinic corner} \cite{Si16b}
analogous to a `first' homoclinic tangency for smooth maps \cite{PaTa93}.

In \cite{GhSi21} we introduced the renormalisation operator $g : \Phi \to \Phi$ defined by
\begin{equation}
g(\xi) = \big( \tau_R^2 - 2 \delta_R, \delta_R^2, \tau_L \tau_R - \delta_L - \delta_R, \delta_L \delta_R \big).
\nonumber
\end{equation}
For all $n \ge 0$, let
\begin{equation}
\cR_n = \left\{ \xi \in \Phi \,\middle|\, \phi \left( g^n(\xi) \right) > 0,\, \phi \left( g^{n+1}(\xi) \right) \le 0 \right\}.
\label{eq:Rn}
\end{equation}
As shown in \cite{GhSi21}, the sets $\cR_n$ are non-empty, mutually disjoint, and converge to $\{ \xi^* \}$ as $n \to \infty$,
where $\xi^* = (1,0,-1,0)$ is a fixed point of $g$ that lies on the boundary of $\Phi$.
For any two-dimensional slice of parameter space defined by fixing the values of $\delta_L > 0$ and $\delta_R > 0$,
only finitely many $\cR_n$ are visible.
Moreover, $\cR_0$ and $\cR_1$ are by far the largest regions.
Indeed in the example slices shown in Fig.~\ref{fig:zd_Slices}
these are the only $\cR_n$ that are visible.

\begin{figure}[b!]
\begin{center}
\setlength{\unitlength}{1cm}
\begin{picture}(17,8.15)
\put(0,0){\includegraphics[width=8cm]{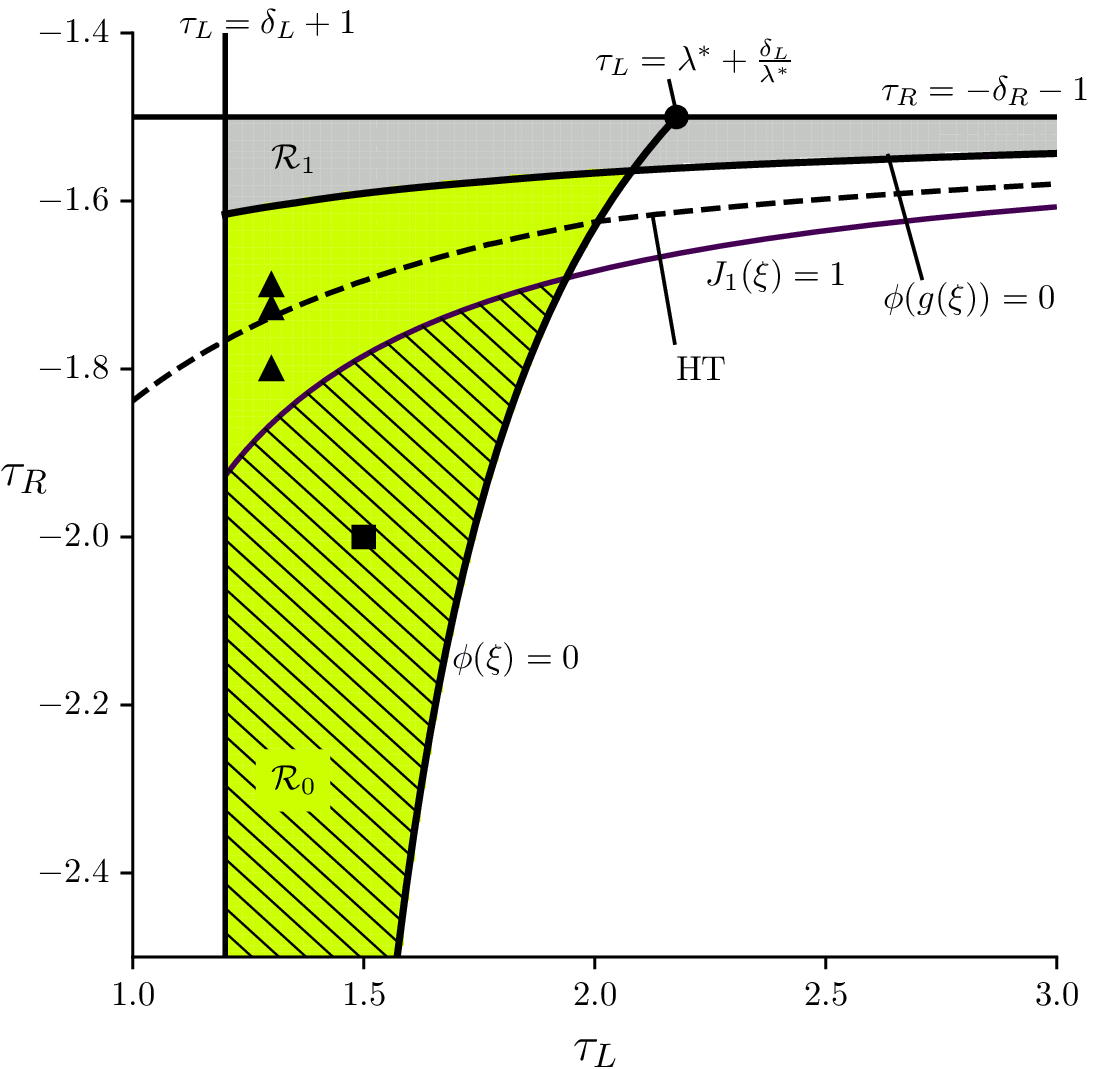}}
\put(9,0){\includegraphics[width=8cm]{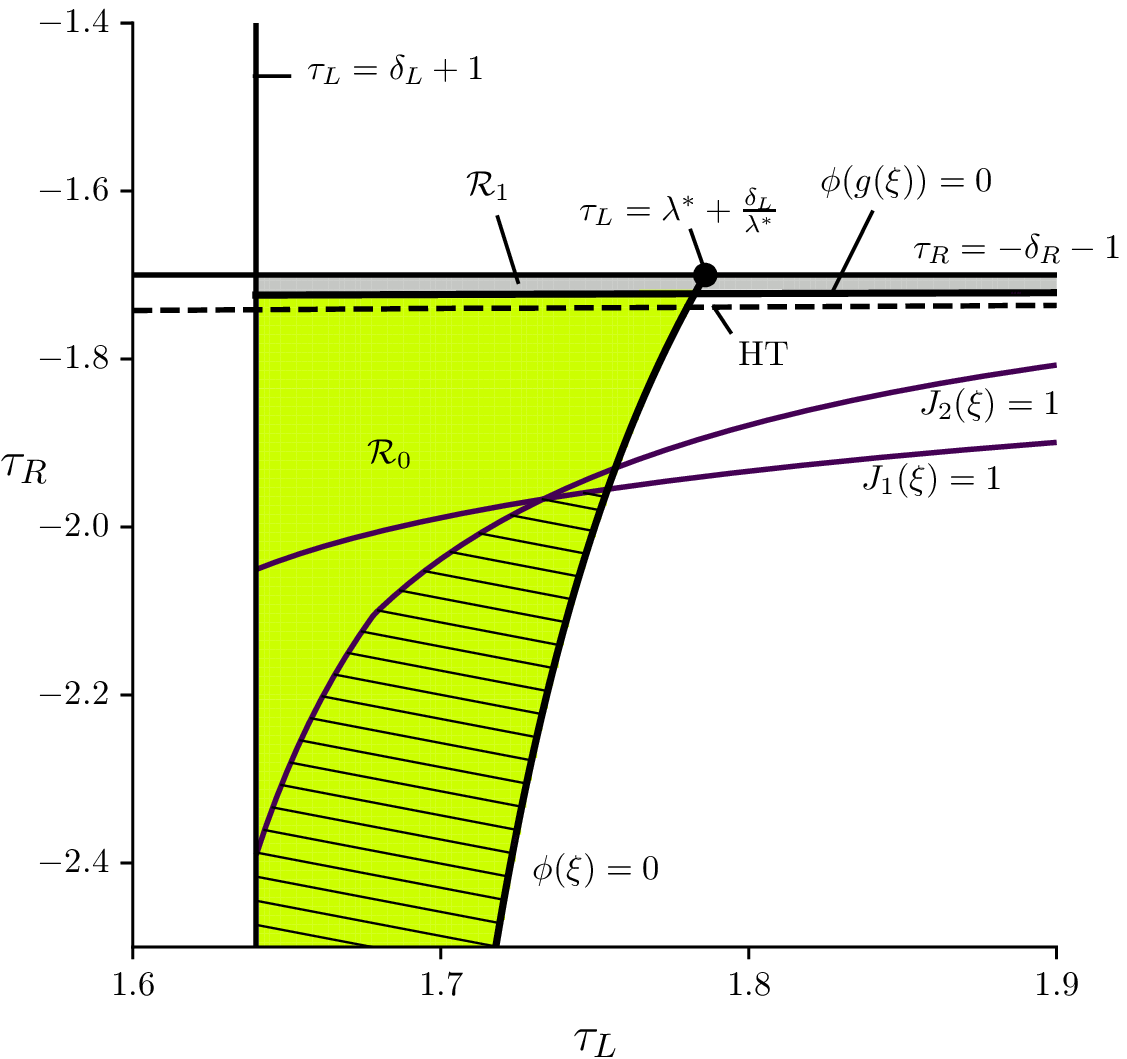}}
\put(2.4,8){\small {\bf a)}~~$\delta_L = 0.2$, $\delta_R = 0.5$}
\put(11.4,8){\small {\bf b)}~~$\delta_L = 0.64$, $\delta_R = 0.7$}
\end{picture}
\caption{
Two-dimensional slices of parameter space defined by fixing the values of $\delta_L$ and $\delta_R$ as indicated.
The regions $\cR_0$ and $\cR_1$ are coloured yellow and grey respectively.
The curved boundary $\phi(\xi) = 0$ intersects the horizontal boundary $\tau_R = -\delta_R -1$
at $\tau_L = \lambda^* + \frac{\delta_L}{\lambda*}$, where $\lambda^*$ is the largest root of the quadratic
$-\delta_R \lambda^2 + (1-\delta_L) \lambda + \delta_R$, see \cite{GhSi21}.
The striped regions are where Theorems \ref{th:stableManifoldDense} and \ref{th:DevaneyChaos} apply.
The curves labelled HT are heteroclinic bifurcations discussed in \S\ref{sub:HT}.
In panel (a) the black square indicates the parameter values of Fig.~\ref{fig:zd_Lambda};
the black triangles indicate the parameter values of Fig.~\ref{fig:zd_HTA}.
\label{fig:zd_Slices}
} 
\end{center}
\end{figure}

We now state the main results.
These are proved in later sections and involve the functions
\begin{align}
J_1(\xi) &= \frac{\lambda_L^u \lambda_R^{u^2}}{\lambda_L^u + |\lambda_R^u|},
\label{eq:a1} \\
J_2(\xi) &= {\rm max} \left\{ \lambda_L^s, \frac{\sqrt{2} \lambda_L^s}{\lambda_L^s + 1} \right\} + 
{\rm max} \left\{ |\lambda_R^s|, \frac{\sqrt{2} |\lambda_R^s|}{|\lambda_R^s| + 1} \right\}.
\label{eq:a2strong}
\end{align}
Since the eigenvalues $\lambda_R^s$ and $\lambda_R^u$ are negative we have used absolute values
to make the signs of the various quantities readily apparent.
The results here tell about the dynamics of $f_\xi$ in $\cR_0$
because $J_1(\xi) > 1$ implies $\xi \in \cR_0$ for any $\xi \in \Phi_{\rm BYG}$, see Lemma \ref{le:Zexists}.

\begin{theorem}
Let $\xi \in \Phi_{\rm BYG}$ and suppose $J_1(\xi) > 1$ and $\lambda_L^s + |\lambda_R^s| < 1$.
Then $W^s(X)$ is dense in a triangular region containing $\Lambda$.
\label{th:stableManifoldDense}
\end{theorem}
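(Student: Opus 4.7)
My plan is to construct an explicit triangular region $T \supset \Lambda$ and then show $W^s(X) \cap T$ is dense in $T$ by iterating a seed segment of $W^s(X)$ backwards under $f_\xi$, exploiting the invariant expanding cones to be developed in \S\ref{sec:fInverse} for the inverse dynamics. I would fix $T$ as a triangle whose edges are drawn from natural invariant objects --- typically a piece of $W^s_{\rm loc}(Y)$, the seed segment $\lineSeg{f_\xi(V)}{f_\xi^{-1}(V)}$, and possibly a portion of the switching manifold --- chosen so that $T$ is backward invariant in the sense that both branches of $f_\xi^{-1}$ return the relevant pieces of $T$ into $T$. Containment $\Lambda \subset T$ should follow because the attractor is trapped between the branches of $W^s(Y)$ that bound the basin of attraction and is generated by a local piece of $W^u(X)$ that sits inside $T$.

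Next I would invoke the inverse-map cone machinery. The linear parts of $f_\xi^{-1}$ are $A_L^{-1}$ and $A_R^{-1}$, whose unstable eigenvalues $\frac{1}{\lambda_L^s}$ and $\frac{1}{|\lambda_R^s|}$ both exceed one. The hypothesis $\lambda_L^s + |\lambda_R^s| < 1$ is precisely the quantitative condition (an inverse-map analog of a $J_2 < 1$-type bound) needed to produce a pair of cones in the tangent space that $A_L^{-1}$ and $A_R^{-1}$ carry into each other and on which they expand uniformly by some factor $\rho > 1$. Applying the inverse-map analog of the line-segment lemma from \S\ref{sec:lineSegments} then yields that any segment $\ell \subset T$ tangent to the cone maps under $f_\xi^{-1}$ to a union of cone-tangent segments contained in $T$ whose total length is at least $\rho\,|\ell|$.

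Starting from $\ell_0 := \lineSeg{f_\xi(V)}{f_\xi^{-1}(V)} \subset W^s(X)$ and iterating backwards, I obtain collections of segments of $W^s(X) \cap T$ whose total length grows at least like $\rho^n$. Since they live in the bounded set $T$, they must accumulate; the denseness claim then reduces to ruling out the existence of an open disk $U \subset T$ disjoint from $W^s(X)$, which would contradict exponential length growth inside $T \setminus U$ given uniformly bounded cone directions.

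The main obstacle is the bookkeeping through the switching manifold: the backward iterates of $\ell_0$ splinter into many sub-segments, and I must verify that (i) every sub-segment stays inside the invariant cone and inside $T$, (ii) the aggregate length estimate $\rho^n |\ell_0|$ survives unfavourable splittings, and (iii) the accumulated family actually forms an $\ee$-net of $T$ for every $\ee > 0$ rather than piling on a lower-dimensional subset. Cone invariance and backward invariance of $T$ take care of (i); the inverse line-segment lemma takes care of (ii); part (iii) is the crux of the density argument and relies on the triangular geometry of $T$ together with $J_1(\xi) > 1$, which by Lemma \ref{le:Zexists} places $\xi$ in $\cR_0$ and constrains the shape of $\Lambda$.
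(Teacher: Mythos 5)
Your overall strategy --- growing $W^s(X)$ backwards from the fundamental domain $\lineSeg{f_\xi(V)}{f_\xi^{-1}(V)}$ using expanding cones for $A_L^{-1}$ and $A_R^{-1}$, and then arguing that the resulting family of segments must be dense in a backward-invariant triangle --- is genuinely different from the paper's, and it has a gap at exactly the step you flag as the crux. Exponential growth of the total length of a family of cone-tangent segments inside a bounded region does not imply density: the segments can accumulate on a closed set with empty interior while avoiding an open disk $U$ entirely (this is precisely what the invariant manifolds of a Smale horseshoe do --- infinite length in a box, yet nowhere dense in it). Nothing in your outline supplies a mechanism forcing the backward iterates of $\ell_0$ to enter every $B_\ee(\tilde{P})$, and ``contradicting exponential length growth inside $T \setminus U$'' is not a contradiction, since $T \setminus U$ has positive area and accommodates arbitrarily long unions of segments. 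A secondary problem is your attribution of the hypothesis $\lambda_L^s + |\lambda_R^s| < 1$: the condition guaranteeing that the \emph{longest} piece of a split segment still expands under $f_\xi^{-1}$ is $\frac{1}{\hat{c}_L} + \frac{1}{\hat{c}_R} < 1$, i.e.\ $J_2(\xi) < 1$, which the paper explicitly notes is strictly stronger than $\lambda_L^s + |\lambda_R^s| < 1$; so even the quantitative backward-expansion estimate you rely on is not available under the stated hypotheses. There is also doubt that a bounded triangle built from $W^s(Y)$ and the seed segment can trap backward iterates, given that backward orbits of a dense set of points diverge.

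The paper's proof runs in the opposite direction and avoids all of this. It takes an arbitrary $\tilde{P}$ in the forward-invariant triangle $\Omega$ (vertices $D$, $f_\xi(D)$, $B$), places a short segment $\alpha \subset B_\ee(\tilde{P}) \cap \Omega$ with slope in the forward cone $K = \left[ -\lambda_L^s, |\lambda_R^s| \right]$ of Lemma \ref{le:K}, and iterates \emph{forward}. Lemma \ref{le:intersectionWithBothAxes} shows that some $f_\xi^n(\alpha)$ contains a segment $\lineSeg{P}{Q}$ stretching from $\Sigma$ to $f_\xi(\Sigma)$ that transversally crosses the local stable manifold $\lineSeg{V}{f_\xi^{-1}(V)}$ at a point $S$; then $f_\xi^{-n}(S) \in W^s(X) \cap \alpha \subset B_\ee(\tilde{P})$. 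Here $J_1(\xi) > 1$ provides the expansion that forces $\alpha$ to grow until it crosses $\Sigma$ on consecutive iterates, and $\lambda_L^s + |\lambda_R^s| < 1$ enters only through Lemma \ref{le:U2V2} (the inequality $U_2 > V_2$), which guarantees that the resulting crossing segment actually straddles $E^s(X)$. This forward, pointwise argument is what converts length growth into density; to salvage a backward-iteration proof you would need an analogous ``every small disk is eventually hit'' statement for $f_\xi^{-1}$, which your outline does not provide.
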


\begin{theorem}
Let $\xi \in \Phi_{\rm BYG}$ and suppose $J_1(\xi) > 1$ and $J_2(\xi) < 1$.
Then $f_\xi$ is chaotic in the sense of Devaney on $\Lambda$.
\label{th:DevaneyChaos}
\end{theorem}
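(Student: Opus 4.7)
The plan is to establish the two nontrivial clauses of Devaney's definition -- topological transitivity and density of periodic orbits on $\Lambda$ -- and then invoke \cite{BaBr92} for sensitive dependence, which the paper has already flagged as automatic. The workhorses will be the forward and backward invariant expanding cones developed in Sections \ref{sec:invariantCone}--\ref{sec:fInverse}: I expect $J_1(\xi) > 1$ to guarantee that any line segment whose tangent lies in a forward (unstable) cone, after being iterated once by $f_\xi$ and possibly cut by the switching manifold $x = 0$, has its total length multiplied by a uniform factor strictly greater than $1$; symmetrically, $J_2(\xi) < 1$ should give the analogous statement for $f_\xi^{-1}$ on segments tangent to a backward (stable) cone, with the specific $\sqrt{2}$-corrected form of $J_2$ ensuring that splitting a backward iterate across the two half-planes and reassembling still yields strict expansion.

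For transitivity, given nonempty relatively open sets $U, V \subset \Lambda$, I would use $\Lambda = \mathrm{cl}(W^u(X))$ to extract a cone-tangent segment $\sigma \subset W^u(X) \cap U$, then iterate forward. The forward growth lemma will make the total length of $f_\xi^n(\sigma)$ grow without bound inside the bounded set $\Lambda$, while the rigidity of the cone direction forces these iterates to fold and become dense in $\Lambda$, so some $f_\xi^n(\sigma)$ meets $V$.

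For density of periodic orbits, fix $p \in \Lambda$ and $\varepsilon > 0$. By Theorem \ref{th:stableManifoldDense}, whose hypotheses are implied by those here, $W^s(X)$ is dense in a region containing $p$, so I can build a small pseudo-rectangle $R$ centred near $p$ of diameter below $\varepsilon$, with its stable sides lying in $W^s(X)$ and its unstable sides in $W^u(X)$. Forward iteration stretches $R$ in the unstable direction across a macroscopic portion of $\Lambda$, backward iteration stretches it in the stable direction, and a standard horseshoe-style argument -- two stretched images forming a topological cross, then a piecewise-linear graph-transform contraction -- produces a fixed point of some iterate $f_\xi^{n+m}$ inside $R$, which lies in $\Lambda$ and is within $\varepsilon$ of $p$.

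The main obstacle is the bookkeeping that accompanies segments being fractured by the switching manifold. In a single linear piece the expansion and graph-transform arguments are elementary; here each iterate may cut a segment into sub-segments that subsequently evolve under different Jacobians, and orientations must be tracked carefully to stay inside the cones. The constants $J_1$ and $J_2$ are engineered precisely so that even the worst-case split-and-recombine still gives strict net expansion in the appropriate direction, and once the forward and backward growth lemmas are in hand, the Markov-return argument closes up.
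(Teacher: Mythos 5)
Your overall architecture (invariant expanding cones for $f_\xi$ and $f_\xi^{-1}$, a horseshoe for periodic points, \cite{BaBr92} for sensitivity) matches the paper's, but the step that actually makes both clauses work is missing. For transitivity you assert that because $|f_\xi^n(\sigma)|$ grows without bound inside the bounded set $\Lambda$, the iterates ``fold and become dense in $\Lambda$''. Unbounded length growth in a bounded region does not imply density: the images could accumulate on a proper closed invariant subset. The paper closes this gap with Lemma \ref{le:intersectionWithBothAxes}: the geometry of the trapping region $\Omega$ and the cone $K$ force some iterate $f_\xi^{n_1}(\alpha)$ to contain a sub-segment running from $\Sigma$ to $f_\xi(\Sigma)$, and the inequality $U_2 > V_2$ of Lemma \ref{le:U2V2} (which is where $\lambda_L^s + |\lambda_R^s| < 1$, hence $J_2(\xi) < 1$, enters) guarantees that this sub-segment crosses the fundamental piece $\lineSeg{V}{f_\xi^{-1}(V)}$ of $W^s(X)$ transversally. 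That transversal homoclinic point is the engine of the whole proof: it yields a Smale horseshoe, whose dense periodic points lie in $\Lambda$ because $W^u(X)$ is dense in the horseshoe's non-wandering set (giving density of periodic orbits near the arbitrary $\tilde{P} \in \Lambda$), and the Lambda Lemma applied at that intersection makes the further iterates of $f_\xi^{n_1}(\alpha)$ accumulate on all of $W^u(X)$, i.e.\ on all of $\Lambda$, which is exactly the transitivity statement. Without this crossing argument neither your density-by-folding claim nor your Markov return is justified.

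Your periodic-orbit construction suffers from the same missing step in different clothing: the pseudo-rectangle $R$ with sides in $W^s(X)$ and $W^u(X)$ can be set up using Theorem \ref{th:stableManifoldDense} and $\Lambda = {\rm cl}(W^u(X))$, but the claim that forward iteration stretches $R$ ``across a macroscopic portion of $\Lambda$'' and then returns to cross $R$ in Markov fashion is precisely the recurrence you have not proved; it again reduces to exhibiting a transversal intersection of the stretched unstable sides with $W^s(X)$. Once Lemma \ref{le:intersectionWithBothAxes} is available your rectangle route could probably be completed, but at that point the standard transversal-homoclinic-point horseshoe, as the paper uses, is shorter and avoids the graph-transform bookkeeping across the switching manifold. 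A secondary omission: you do not explain what the full strength of $J_2(\xi) < 1$ (beyond $\lambda_L^s + |\lambda_R^s| < 1$) buys; in the paper it drives Lemma \ref{le:XiDense}, showing that points with divergent backward orbits are dense, which is what forces $\tilde{\Delta}$ to have empty interior and gives $\Lambda = \tilde{\Delta}$ in Lemma \ref{le:Lambda}.
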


Notice $J_2(\xi) < 1$ is a stronger condition than $\lambda_L^s + |\lambda_R^s| < 1$.
The striped regions in Fig.~\ref{fig:zd_Slices} show where the conditions of Theorem \ref{th:DevaneyChaos} hold
for the given fixed values of $\delta_L$ and $\delta_R$.
As mentioned above these are necessarily subsets of $\cR_0$.
In panel (a) the condition $J_2(\xi) < 1$ does not influence the boundary of the striped region
because this condition is satisfied automatically when the values of $\delta_L$ and $\delta_R$ are sufficiently small.
In panel (b) we have chosen values of $\delta_L$ and $\delta_R$ that highlight
the nonsmoothness of the constraint $J_2(\xi) < 1$.

The conditions $J_1(\xi) > 1$ and $J_2(\xi) < 1$ arise quite naturally.
First $J_1(\xi) > 1$ ensures the second iterate $f_\xi^2$ is expanding in some directions,
whilst $J_2(\xi) < 1$ ensures the inverse $f_\xi^{-1}$ is expanding in some other directions.
Below we use these expansion properties to verify Devaney chaos.
Nevertheless, numerical explorations suggest that these conditions are sub-optimal and that
$\Lambda$ in fact has Devaney chaos for all $\xi \in \cR_0$.
A proof of this remains for future work.

\subsection{A heteroclinic bifurcation involving $\Lambda$}
\label{sub:HT}

For the most part $\Lambda$ appears to vary continuously as $\xi$ is varied continuously within $\cR_0$,
but jumps in size when it develops intersections with the stable manifold of a period-three cycle.
This is illustrated in Fig.~\ref{fig:zd_HTA}.
By increasing the value of $\tau_R$ an intersection first occurs at $\tau_R \approx -1.727455$.
This heteroclinic bifurcation is an example of a {\em crisis} \cite{GrOt83}.

\begin{figure}[h!]
\begin{center}
\setlength{\unitlength}{1cm}
\begin{picture}(17,5.1)
\put(0,0){\includegraphics[width=5.2cm]{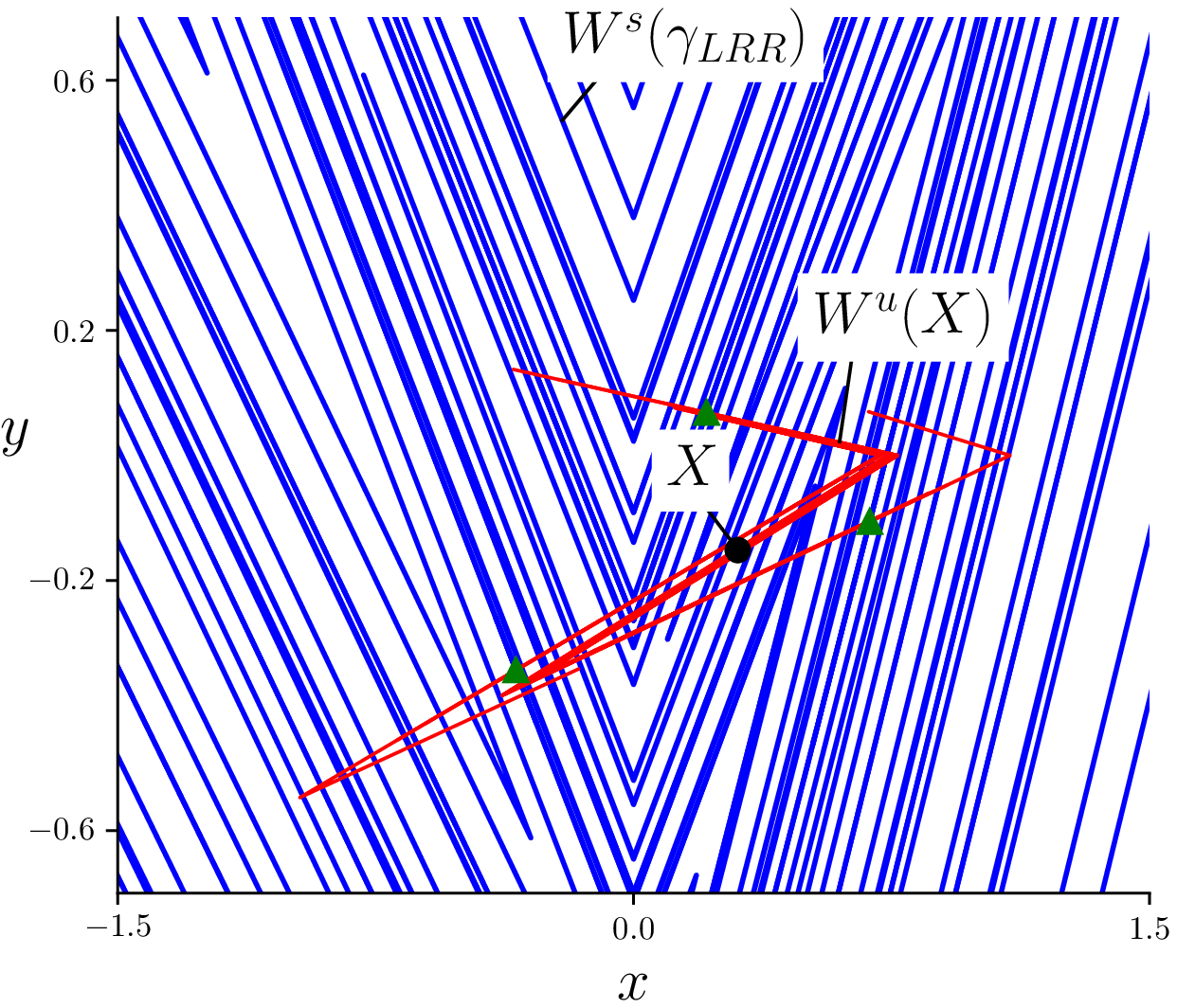}}
\put(5.9,0){\includegraphics[width=5.2cm]{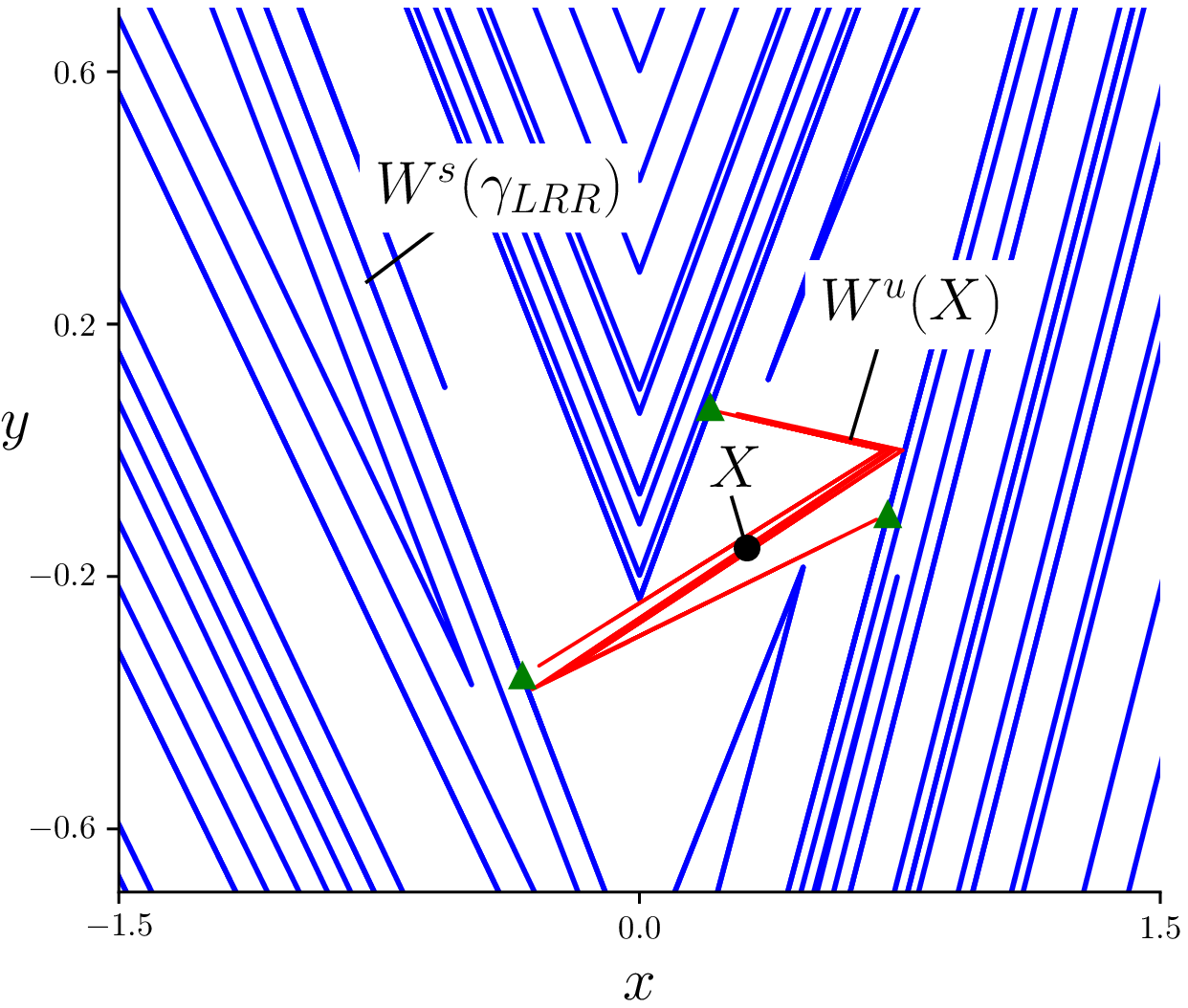}}
\put(11.8,0){\includegraphics[width=5.2cm]{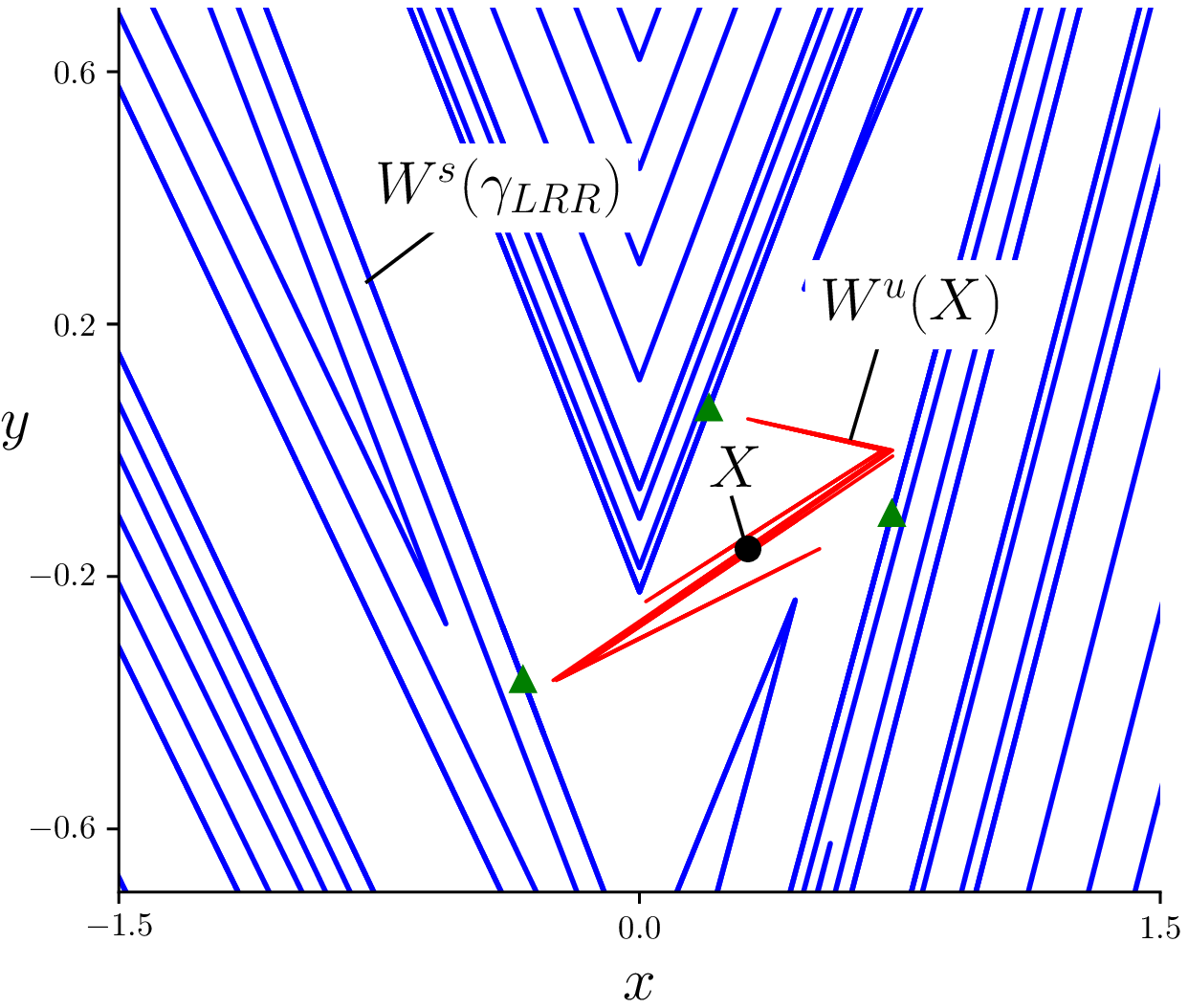}}
\put(1.7,4.95){\small {\bf a)}~~$\tau_R = -1.7$}
\put(7,4.95){\small {\bf b)}~~$\tau_R = -1.727455$}
\put(13.5,4.95){\small {\bf c)}~~$\tau_R = -1.8$}
\end{picture}
\caption{
Phase portraits of \eqref{eq:f} with $\delta_L = 0.2$, $\delta_R = 0.5$, $\tau_L = 1.3$,
and three different values of $\tau_R$ as indicated.
These parameter combinations are shown in Fig.~\ref{fig:zd_Slices}.
In each plot the three green triangles are the points of a saddle-type $LRR$-cycle, denoted $\gamma_{LRR}$.
\label{fig:zd_HTA}
} 
\end{center}
\end{figure}

Numerically we have computed curves in Fig.~\ref{fig:zd_Slices} (labelled HT) where the heteroclinic bifurcation occurs.
This was a computationally intensive task because, unlike the simple corner intersections
associated with $\phi(\xi) = 0$ for example
that involve the first couple of linear pieces of $W^s(Y)$ and $W^u(Y)$ as they emanate outwards from $Y$,
the corner intersections associated with the heteroclinic bifurcation
involve the limiting outer-most part of $W^u(X)$ as is it grown outwards from $X$ indefinitely.
Curves of such bifurcations were considered in \cite{Os06} where it was
argued that other bifurcations may contribute to the crisis.
It remains to be seen whether or not such bifurcations are present in our piecewise-linear setting.

The attractor $\Lambda$ appears to persist to the right of $\phi(\xi) = 0$
in the narrow strip bounded above by $\phi(g(\xi)) = 0$
and bounded below by the heteroclinic bifurcation curve.
From a point within this strip,
if we cross $\phi(g(\xi)) = 0$ we enter $\cR_1$ and $\Lambda$ is replaced by an attractor with two connected components,
while if we cross the heteroclinic bifurcation forward orbits of points near $X$ are able to diverge
so $W^u(X)$ suddenly becomes unbounded and the attractor is destroyed.
Numerical investigations suggest this occurs for any values of $\delta_L > 0$ and $\delta_R > 0$,
that is, it is always the stable manifold of the $LRR$-cycle (period-three solution with symbolic itinerary $LRR$)
that is responsible for the crisis.

\subsection{Additional notation}

Here we collectively introduce additional notation and conventions used regularly in subsequent sections.

We write $\| \cdot \|$ for the Euclidean norm on $\mathbb{R}^2$.
We use subscripts to denote the components of a point, e.g.~$P = (P_1,P_2) \in \mathbb{R}^2$.
We write $B_\ee(P) = \left\{ Q \in \mathbb{R}^2 \,\middle|\, \| P - Q \| < \ee \right\}$
for the ball of radius $\ee > 0$ centred at a point $P \in \mathbb{R}^2$.

Given $P, Q \in \mathbb{R}^2$,
the line segment connecting $P$ and $Q$, and including $P$ and $Q$, is denoted $\lineSeg{P}{Q}$.
The {\em length} of a line segment $\lineSeg{P}{Q}$ is the distance between its endpoints
and denoted $|\lineSeg{P}{Q}|$.

The switching manifold of $f_\xi$ is the set
$\Sigma = \left\{ (0,y) \,\middle|\, y \in \mathbb{R} \right\}$ --- the $y$-axis.
For any $\xi \in \mathbb{R}^4$, its image $f_\xi(\Sigma)$ is the $x$-axis.

\section{The stable and unstable manifolds of the fixed points}
\label{sec:XY}
\setcounter{equation}{0}

Since $f_\xi$ is piecewise-linear,
the stable and unstable manifolds of the fixed points $X$ and $Y$ are themselves piecewise-linear.
As they emanate from the fixed point they coincide with the corresponding stable or unstable subspace.

Let us first consider the unstable manifold of $Y$, denoted $W^u(Y)$, which corresponds to the
unstable eigenvalue $\lambda_L^u$.
As $W^u(Y)$ emanates from $Y$ it coincides with the unstable subspace, $E^u(Y)$,
and has two dynamically independent branches because $\lambda_L^u$ is positive.
The first kink of the right branch of $W^u(Y)$, as we follow it outwards from $Y$,
occurs at the point
\begin{equation}
D = \left( \frac{1}{1 - \lambda_L^s}, 0 \right),
\label{eq:D}
\end{equation}
see Fig.~\ref{fig:zd_fOmega}.
For all $\xi \in \Phi$, $D$ lies to the right of $(1,0)$, whereas $f_\xi(D)$ lies in the left half-plane.
Let $U$ denote the intersection of $\lineSeg{D}{f_\xi(D)}$ with $\Sigma$.
The right branch of $W^u(Y)$ contains the line segments $\lineSeg{Y}{D}$ and $\lineSeg{D}{f_\xi(D)}$
and can be `grown' by iterating $\lineSeg{D}{f_\xi(D)}$ under $f_\xi$
because this line segment, minus one of its endpoints,
is a {\em fundamental domain} \cite{PaTa93} for this branch of the manifold.

\begin{figure}[b!]
\begin{center}
\includegraphics[width=8cm]{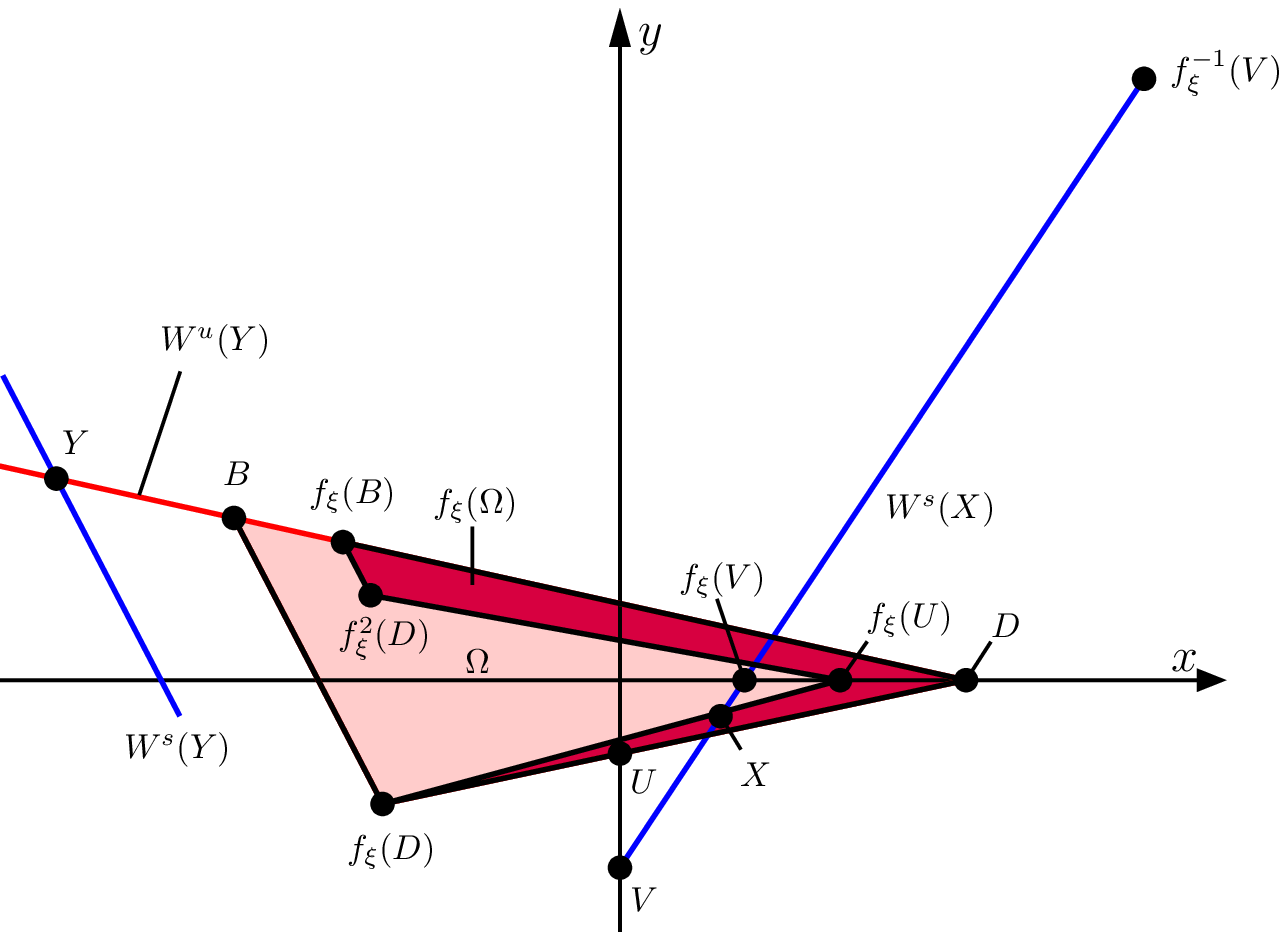}
\caption{
A sketch of the forward invariant region $\Omega$
that is constructed using kinks of the unstable manifold of $Y$ (red).
We also show its image $f_\xi(\Omega)$
and parts of the stable manifolds of $X$ and $Y$ (blue).
\label{fig:zd_fOmega}
} 
\end{center}
\end{figure}

As in \cite{GlSi21}, let $B$ denote the intersection of
$\lineSeg{Y}{D}$ with the line through $f_\xi(D)$ and parallel to the stable subspace $E^s(Y)$, see Fig.~\ref{fig:zd_fOmega}.
Then let $\Omega$ be the filled triangle with vertices $D$, $f_\xi(D)$, and $B$.
The following result characterises the image of $\Omega$ under $f_\xi$.
An immediate consequence is that
$\Omega$ is forward invariant under $f_\xi$ for all $\xi \in \Phi_{\rm BYG}$.
Essentially this is the case because $\xi \in \Phi_{\rm BYG}$ implies $f_\xi(D)$ lies to the right of $E^s(Y)$;
refer to \cite{GlSi21} for a proof.

\begin{lemma}
Let $\xi \in \Phi_{\rm BYG}$.
Then $f_\xi(\Omega) \subset \Omega$ and is the polygon with vertices
$D$, $f_\xi(D)$, $f_\xi(U)$, $f_\xi^2(D)$, and $f_\xi(B)$.
\label{le:fOmega}
\end{lemma}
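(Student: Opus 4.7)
The plan is to compute $f_\xi(\Omega)$ by splitting $\Omega$ along the switching manifold $\Sigma$ and applying the two linear branches of $f_\xi$ separately. First I would identify the two places where the boundary of $\Omega$ meets $\Sigma$. By construction $\lineSeg{D}{f_\xi(D)}$ meets $\Sigma$ at $U$, and I claim the other edge that crosses $\Sigma$ is $\lineSeg{B}{D}$, with crossing point $V = \left(0, \frac{\lambda_L^s}{1-\lambda_L^s}\right)$. Since $B$, $D$, and $Y$ are collinear on the unstable line $E^u(Y)$ with direction $\left(1, -\frac{\delta_L}{\lambda_L^u}\right)$, solving for the intersection of this line with $\Sigma$ and simplifying via the identities $\tau_L - \delta_L - 1 = (\lambda_L^u - 1)(1 - \lambda_L^s)$ and $\lambda_L^s \lambda_L^u = \delta_L$ gives the claimed expression. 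That $V$ actually lies in the interior of $\lineSeg{B}{D}$ (rather than on its extension beyond $B$) reduces to $B$ lying in the open left half-plane, which is the geometric content of $\xi \in \Phi_{\rm BYG}$ as already noted in the excerpt.

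The central observation is the algebraic coincidence $f_\xi(V) = (V_2 + 1, 0) = D$, which holds via either branch since $V \in \Sigma$. With this, $\Omega$ decomposes as $\Omega_R \cup \Omega_L$, where $\Omega_R$ is the triangle with vertices $D$, $U$, $V$ in the closed right half-plane and $\Omega_L$ is the quadrilateral with cyclically ordered vertices $V$, $U$, $f_\xi(D)$, $B$ in the closed left half-plane. Applying the right-branch affine formula to $\Omega_R$ and the left-branch affine formula to $\Omega_L$ produces, respectively, a triangle with vertices $D$, $f_\xi(U)$, $f_\xi(D)$ and a quadrilateral with cyclically ordered vertices $D$, $f_\xi(U)$, $f_\xi^2(D)$, $f_\xi(B)$ (both $\det A_L$ and $\det A_R$ are positive so cyclic order is preserved, and $f_\xi(V)$ has been renamed $D$). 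Because the two branches agree on $\Sigma$, the shared edge $\lineSeg{U}{V} \subset \Sigma$ maps to the common boundary segment $\lineSeg{f_\xi(U)}{D}$ on the $x$-axis. Gluing the two images along this segment and cancelling it yields exactly the pentagon with cyclic vertex list $D$, $f_\xi(D)$, $f_\xi(U)$, $f_\xi^2(D)$, $f_\xi(B)$ asserted by the lemma.

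The inclusion $f_\xi(\Omega) \subset \Omega$ is the content of the cited proof in \cite{GlSi21}; alternatively, since $\Omega$ is convex it suffices to verify each of the five pentagon vertices lies in $\Omega$. Three of them ($D$, $f_\xi(D)$, and the endpoint along $\lineSeg{Y}{D}$ involving $f_\xi(B)$) are controlled by direct inspection, while $f_\xi(U)$ and $f_\xi^2(D)$ are controlled by sign conditions encoded in $\phi(\xi) > 0$. The main obstacle is the geometric bookkeeping: verifying that $V$ is genuinely the second $\Omega$--$\Sigma$ crossing, confirming the cyclic order of vertices in $\Omega_L$ and $\Omega_R$, and ruling out a spurious collinearity that would collapse the pentagon to fewer vertices. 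Conceptually the proof is not deep, but the identity $f_\xi(V) = D$ is the linchpin without which the pentagon's vertex list would fail to close up with $D$ as stated.
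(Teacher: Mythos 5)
Your proposal cannot be compared against an in-paper argument because the paper does not prove this lemma at all: it states the result and refers to \cite{GlSi21} for the proof, so what you have written is a genuine reconstruction rather than an alternative to something in the text. The polygon-identification half of your argument is correct and is the natural one: $\Omega$ is cut by $\Sigma$ along $\lineSeg{D}{f_\xi(D)}$ at $U$ and along $\lineSeg{B}{D}$ at the point $\left(0,\tfrac{\lambda_L^s}{1-\lambda_L^s}\right)$, and the identity $f_\xi\left(0,\tfrac{\lambda_L^s}{1-\lambda_L^s}\right)=\left(\tfrac{1}{1-\lambda_L^s},0\right)=D$ is exactly the statement that $D$ is the first kink of $W^u(Y)$, i.e.\ the image of the point where $E^u(Y)$ crosses $\Sigma$; gluing the two affine images along $\lineSeg{D}{f_\xi(U)}\subset f_\xi(\Sigma)$ then yields the stated pentagon. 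Two cautions. First, do not call your crossing point $V$: the paper reserves $V$ for the first kink of $W^s(X)$, equation \eqref{eq:V}, which is a different point of $\Sigma$ (its second component is negative), and both points appear in \S\ref{sec:XY}. Second, your justification that $B$ lies in the open left half-plane is misattributed: the paper's remark about $\Phi_{\rm BYG}$ concerns $f_\xi(D)$ lying to the right of $E^s(Y)$, i.e.\ $\phi(\xi)>0$, which is a different fact; the statement you actually need follows from $\xi\in\Phi$ alone, since comparing the $\Sigma$-intercepts of $E^u(Y)$ and of the line through $f_\xi(D)$ parallel to $E^s(Y)$ reduces to $\lambda_L^u\tau_R+\lambda_L^u-\delta_L-\delta_R-\lambda_L^s<0$, which holds because $\tau_R<-1$. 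Finally, the containment $f_\xi(\Omega)\subset\Omega$ is the substantive half of the lemma and the place where $\phi(\xi)>0$ genuinely enters (it keeps $f_\xi^2(D)$ and $f_\xi(B)$ on the correct side of the edge $\lineSeg{B}{f_\xi(D)}$); you leave this at the level of a sketch and defer to \cite{GlSi21}, which is defensible here only because the paper does exactly the same, and your reduction to checking that the five image vertices lie in the convex set $\Omega$ is the right way to organise that verification should you carry it out.
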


We now consider the stable manifold of $X$, $W^s(X)$.
This manifold has only one dynamically independent branch
because the corresponding eigenvalue $\lambda_R^s$ is negative.
The first kink of $W^s(X)$, as we follow it outwards from $X$, occurs at the point
\begin{equation}
V = \left( 0, \frac{-\lambda_R^u}{\lambda_R^u - 1} \right),
\label{eq:V}
\end{equation}
see again Fig.~\ref{fig:zd_fOmega}.
The stable manifold $W^s(X)$ contains $\lineSeg{V}{f_\xi^{-1}(V)} \subset E^s(X)$
and can be grown by iterating $\lineSeg{f_\xi(V)}{f_\xi^{-1}(V)}$ under $f_\xi^{-1}$.
This is because $\lineSeg{f_\xi(V)}{f_\xi^{-1}(V)}$, minus one of its endpoints, is a fundamental domain for $W^s(X)$.

The next result clarifies how $\lineSeg{V}{f_\xi^{-1}(V)}$
intersects the region $f_\xi(\Omega)$
and is used below to prove Lemma \ref{le:intersectionWithBothAxes}.
Specifically Lemma \ref{le:U2V2} implies $f_\xi(V)$ lies to the left of $f_\xi(U)$;
consequently $\lineSeg{V}{f_\xi^{-1}(V)}$ cuts $f_\xi(\Omega)$ into three pieces.

\begin{lemma}
Let $\xi \in \Phi$.
If $\lambda_L^s + |\lambda_R^s| < 1$ then $U_2 > V_2$.
\label{le:U2V2}
\end{lemma}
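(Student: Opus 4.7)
The plan is a direct computation: obtain closed forms for $U_2$ and $V_2$, then factor $U_2-V_2$ so that the hypothesis $\lambda_L^s+|\lambda_R^s|<1$ appears cleanly as one of the factors.

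First I would compute $U_2$. Since $D=\bigl(\tfrac{1}{1-\lambda_L^s},\,0\bigr)$ lies in the right half-plane, applying the right-hand piece of $f_\xi$ gives $f_\xi(D)=(\tau_R D_1+1,\,-\delta_R D_1)$. Parametrising $\lineSeg{D}{f_\xi(D)}$ and setting the first coordinate to zero, I find after a short simplification that
\begin{equation}
U_2 \;=\; \frac{-\delta_R}{(1-\lambda_L^s)(\lambda_L^s-\tau_R)},
\nonumber
\end{equation}
while $V_2=-\lambda_R^u/(\lambda_R^u-1)$ is already supplied by \eqref{eq:V}.

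Next, writing $a=\lambda_L^s$, $b=|\lambda_R^s|$, $c=|\lambda_R^u|$ and using the characteristic identities for $A_R$, namely $\delta_R=\lambda_R^s\lambda_R^u=bc$ and $-\tau_R=|\lambda_R^s|+|\lambda_R^u|=b+c$, I would place $U_2$ and $V_2$ over the common denominator $(1-a)(a+b+c)(c+1)$. The numerator of $U_2-V_2$ becomes $c\bigl[(1-a)(a+b+c)-b(c+1)\bigr]$, and a direct expansion gives the algebraic identity
\begin{equation}
(1-a)(a+b+c)-b(c+1) \;=\; (a+c)(1-a-b),
\nonumber
\end{equation}
whence
\begin{equation}
U_2-V_2 \;=\; \frac{c\,(a+c)(1-a-b)}{(1-a)(a+b+c)(c+1)}.
\nonumber
\end{equation}

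To finish, the eigenvalue facts recorded after \eqref{eq:saddleSaddleRegion} give $a\in(0,1)$, $b\in(0,1)$, $c>1$ for $\xi\in\Phi$, so every factor in the denominator and the factor $c(a+c)$ in the numerator are strictly positive. The hypothesis $\lambda_L^s+|\lambda_R^s|<1$ is exactly $1-a-b>0$, and the conclusion $U_2>V_2$ follows immediately. The only non-mechanical step is spotting the factorisation of the bracketed expression, and it is precisely that factorisation which causes the hypothesis to appear on the nose rather than as an auxiliary inequality; I anticipate no substantive obstacle beyond guarding against a sign error in the $U_2$ computation.
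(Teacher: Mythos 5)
Your proposal is correct and follows essentially the same route as the paper: both compute $U_2=\frac{-\delta_R}{(1-\lambda_L^s)(\lambda_L^s-\tau_R)}$, form $U_2-V_2$, and factorise so that $1-\lambda_L^s-|\lambda_R^s|$ appears as the only factor whose sign is not automatic; indeed your expression $\frac{c(a+c)(1-a-b)}{(1-a)(a+b+c)(c+1)}$ is identical to the paper's \eqref{eq:U2minusV2} after substituting $a=\lambda_L^s$, $b=|\lambda_R^s|$, $c=|\lambda_R^u|$. The only difference is cosmetic: you carry out the factorisation explicitly in eigenvalue variables, whereas the paper simply states the factorised form.
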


\begin{proof}
By iterating \eqref{eq:D} under $f_\xi$ we find that $\lineSeg{D}{f_\xi(D)}$
has slope $\frac{\delta_R}{\lambda_L^s - \tau_R}$ and consequently
\begin{equation}
U = \left( 0, \frac{-\delta_R}{\left( \lambda_L^s - \tau_R \right) \left( 1 - \lambda_L^s \right)} \right).
\label{eq:U}
\end{equation}
By subtracting the second component of \eqref{eq:V} from that of \eqref{eq:U}
and factorising, we obtain
\begin{equation}
U_2 - V_2 =
\frac{\left| \lambda_R^u \right| \left( 1 - \lambda_L^s + \lambda_R^s \right) \left( \lambda_L^s - \lambda_R^u \right)}
{\left( 1 - \lambda_L^s \right) \left( 1 - \lambda_R^u \right) \left( \lambda_L^s - \tau_R \right)}.
\label{eq:U2minusV2}
\end{equation}
If $\lambda_L^s + |\lambda_R^s| < 1$ then each bracketed factor in \eqref{eq:U2minusV2} has a positive value,
thus $U_2 > V_2$.
\end{proof}

\section{Invariant expanding cones}
\label{sec:invariantCone}
\setcounter{equation}{0}

In this section we construct invariant expanding cones for the Jacobian matrices $A_L$ and $A_R$.

\begin{definition}
A nonempty set $C \subset \mathbb{R}^2$ is a {\em cone}
if $t v \in C$ for all $v \in C$ and $t \in \mathbb{R}$.
\label{df:cone}
\end{definition}

\begin{definition}
Let $A$ be a real-valued $2 \times 2$ matrix.
A cone $C \subset \mathbb{R}^2$ is {\em invariant} if $A v \in C$ for all $v \in C$.
The cone is {\em expanding} if there exists an expansion factor $c > 1$ such that
$\left\| A v \right\| \ge c \| v \|$ for all $v \in C$.
\label{df:iec}
\end{definition}

Given an interval $K \subset \mathbb{R}$, the set
\begin{equation}
\Psi_K = \left\{ t \begin{bmatrix} 1 \\ m \end{bmatrix} \,\middle|\, m \in K, t \in \mathbb{R} \right\},
\label{eq:PsiK}
\end{equation}
is a cone.
The vector $v = \begin{bmatrix} 1 \\ -\lambda_L^s \end{bmatrix}$ is an eigenvector of $A_L$ corresponding to the eigenvalue $\lambda_L^u$.
Thus $\left\| A v \right\| = \lambda_L^u \| v \|$.
It is a simple exercise to show that this equation holds with $v = \begin{bmatrix} 1 \\ m \end{bmatrix}$
for exactly one other value of $m \in \mathbb{R}$, namely
\begin{equation}
m_{\rm crit} = \lambda_L^s + \frac{2 \tau_L}{\lambda_L^{u^2} - 1},
\label{eq:mCrit}
\end{equation}
and notice $m_{\rm crit} > 0$.
It is not difficult to show that $\left\| A v \right\| \ge \lambda_L^u \| v \|$
for all $v = \begin{bmatrix} 1 \\ m \end{bmatrix}$ with $-\lambda_L^s \le m \le m_{\rm crit}$.
The following result generalises this observation.

\begin{lemma}
Suppose $\tau_L > \delta_L + 1$ and $\delta_L > 0$.
Let $K = \left[ -\lambda_L^s, m_{\rm max} \right]$ for some $m_{\rm max} \ge 0$.
\begin{romanlist}
\item
If $m_{\rm max} \le m_{\rm crit}$
then $\Psi_K$ is invariant and expanding for $A_L$ with expansion factor $\lambda_L^u$.
\label{it:AL1}
\item
If $m_{\rm max} \le 1$
then $\Psi_K$ is invariant and expanding for $A_L$ with expansion factor
$c_L = {\rm min} \left\{ \lambda_L^u, \frac{\lambda_L^u + 1}{\sqrt{2}} \right\}$.
\label{it:AL2}
\end{romanlist}
\label{le:AL}
\end{lemma}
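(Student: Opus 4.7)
\bigskip
\noindent
\textbf{Proof proposal.}
The plan is to parameterise a typical direction in $\Psi_K$ by its slope: write $v = t (1,m)^T$ with $m \in K$ and $t \in \mathbb{R}$. Since $\|v\|^2 = t^2(1+m^2)$ and
\begin{equation}
A_L v = t \begin{bmatrix} \tau_L + m \\ -\delta_L \end{bmatrix},
\nonumber
\end{equation}
both invariance and expansion reduce to questions about the scalar $m$. Note $\tau_L + m \ge \tau_L - \lambda_L^s = \lambda_L^u > 0$ for every $m \in K$, so $A_L v$ has slope $m' = -\delta_L/(\tau_L+m)$ and is well-defined throughout $K$.

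For invariance, I would observe that the induced map $m \mapsto m'$ is increasing (its derivative is $\delta_L/(\tau_L+m)^2$) and fixes $-\lambda_L^s$, the unstable eigen-slope. Hence $m' \ge -\lambda_L^s$ on $K$, and the only nontrivial check is $m' \le m_{\rm max}$ at the right endpoint; this amounts to the inequality $(m_{\rm max}+\lambda_L^s)(m_{\rm max}+\lambda_L^u) \ge 0$, which holds automatically because both factors are non-negative. So invariance is true in both parts \ref{it:AL1} and \ref{it:AL2} without using the specific upper bound.

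For expansion, I would write the required inequality $\|A_L v\|^2 \ge c^2 \|v\|^2$ as $q_c(m) \ge 0$, where
\begin{equation}
q_c(m) = (1-c^2) m^2 + 2\tau_L m + (\tau_L^2 + \delta_L^2 - c^2).
\nonumber
\end{equation}
Since $c > 1$ in both parts, $q_c$ is concave, so it suffices to verify $q_c \ge 0$ at the endpoints of the interval in question. For \ref{it:AL1} with $c = \lambda_L^u$, the unstable eigenvector gives $q_c(-\lambda_L^s) = 0$, and one checks directly that $q_c(m_{\rm crit}) = 0$ as well; hence $q_c(m) = (1-\lambda_L^{u\,2})(m+\lambda_L^s)(m - m_{\rm crit})$, which is non-negative exactly on $[-\lambda_L^s, m_{\rm crit}]$.

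For \ref{it:AL2} with $c = c_L$, the value at $m = -\lambda_L^s$ factors cleanly as $q_{c_L}(-\lambda_L^s) = (1+\lambda_L^{s\,2})(\lambda_L^{u\,2} - c_L^2) \ge 0$ because $c_L \le \lambda_L^u$. The main obstacle, and the step that will take the most algebra, is the endpoint $m=1$. I would split on which branch achieves the minimum in the definition of $c_L$: when $c_L = \lambda_L^u$ the defining inequality forces $\lambda_L^u \le \sqrt{2}+1$, which in turn gives $(\lambda_L^u+1)^2 \ge 2\lambda_L^{u\,2}$, and combining with $\tau_L \ge \lambda_L^u$ yields $q_{c_L}(1) \ge 0$; when $c_L = (\lambda_L^u+1)/\sqrt{2}$, substituting $\tau_L = \lambda_L^s+\lambda_L^u$ and $\delta_L = \lambda_L^s \lambda_L^u$ into $q_{c_L}(1)$ produces the manifestly non-negative expression $2\lambda_L^s(\lambda_L^u+1) + \lambda_L^{s\,2}(1+\lambda_L^{u\,2})$. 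Concavity of $q_{c_L}$ then extends non-negativity to all of $[-\lambda_L^s,1]$, completing part \ref{it:AL2}.
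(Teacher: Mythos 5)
Your proposal is correct and follows essentially the same route as the paper: parameterise directions by slope, check invariance via the induced slope map $m \mapsto -\delta_L/(\tau_L+m)$, and verify expansion by noting that the quadratic $q_c$ is concave so it suffices to check the endpoints of $K$. The only differences are cosmetic (the paper dispatches the right-endpoint invariance check by noting the image slope is negative, and bounds $q_{c_L}(1)$ in one line via $\tau_L > \lambda_L^u$ rather than your case split on which branch attains the minimum in $c_L$).
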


\begin{proof}
Choose any $v \in \Psi_K$.
Then $v = t \begin{bmatrix} 1 \\ m \end{bmatrix}$ for some $m \in K$ and $t \in \mathbb{R}$.
If $t = 0$ then $A_L v \in \Psi_K$ trivially,
otherwise the slope 
of $A_L v$ is $G(m) = \frac{-\delta_L}{\tau_L + m}$.
Notice $G \left( -\lambda_L^s \right) = -\lambda_L^s$.
Also $G$ is increasing and negative-valued on $[0,\infty)$.
Therefore $G(m) \in \left[ -\lambda_L^s, 0 \right) \subset K$ and so $A_L v \in \Psi_K$.
That is, $\Psi_K$ is invariant for $A_L$.

For any $c > 1$ and $t \ne 0$, 
\begin{equation}
H(m) = \| A_L v \|^2 - c^2 \| v \|^2
= t^2 \Big( \left( 1 - c^2 \right) m^2 + 2 \tau_L m + \tau_L^2 + \delta_L^2 - c^2 \Big),
\label{eq:ALProof10}
\end{equation}
is a concave down quadratic function of $m$.
Thus, on $K$, $H$ achieves its minimum at an endpoint of $K$.
With $m = \lambda_L^s$ or $m = m_{\rm crit}$ we have
\begin{equation}
\frac{\| A_L v \|^2}{\| v \|^2} = \lambda_L^{u^2}.
\label{eq:ALProof20}
\end{equation}
Thus with $m_{\rm max} \le m_{\rm crit}$ and $c = \lambda_L^u$, then $H(m) \ge 0$ for all $m \in K$.
That is, $\Psi_K$ is expanding with expansion factor $\lambda_L^u$.
With instead $m = 1$ we have
\begin{equation}
\frac{\| A_L v \|^2}{\| v \|^2} = \frac{(\tau_L + 1)^2 + \delta_L^2}{2} > \frac{(\lambda_L^u + 1)^2}{2}.
\label{eq:ALProof21}
\end{equation}
Thus if $m_{\rm max} \le 1$ and $c^2$ is the minimum of \eqref{eq:ALProof20} and the bound in \eqref{eq:ALProof21}
then again $H(m) \ge 0$ for all $m \in K$.
That is, $\Psi_K$ is expanding with expansion factor $c_L$.
\end{proof}

Next we state a result for $A_R$ that is analogous to Lemma \ref{le:AL}\ref{it:AL2}.
This can be obtained in the same way so we do not provide a proof.

\begin{lemma}
Suppose $\tau_R < -\delta_R - 1$ and $\delta_R > 0$.
Let $K = \left[ m_{\rm min}, |\lambda_R^s| \right]$ for some $m_{\rm min} \in [-1,0]$.
The cone $\Psi_K$ is invariant and expanding for $A_R$ with expansion factor
$c_R = {\rm min} \left\{ |\lambda_R^u|, \frac{|\lambda_R^u| + 1}{\sqrt{2}} \right\}$.
\label{le:AR}
\end{lemma}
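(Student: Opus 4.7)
The plan is to mirror the proof of Lemma~\ref{le:AL}\ref{it:AL2} almost verbatim, with the negative eigenvalues $\lambda_R^u<-1<\lambda_R^s<0$ playing the roles that the positive eigenvalues of $A_L$ played before. The endpoint $|\lambda_R^s|$ of $K$ corresponds to $-\lambda_L^s$ in the earlier lemma: it is the slope of an eigenvector of $A_R$ with eigenvalue $\lambda_R^u$, i.e.\ the expanding direction. The other endpoint $m_{\rm min}\in[-1,0]$ corresponds to the free endpoint $m_{\rm max}\le 1$ in Lemma~\ref{le:AL}\ref{it:AL2}; the constraint $m_{\rm min}\ge -1$ is the analogue of $m_{\rm max}\le 1$.

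For invariance, I would fix $v=t\begin{bmatrix}1\\m\end{bmatrix}$ with $m\in K$ and $t\neq 0$, and study the slope $G(m)=\frac{-\delta_R}{\tau_R+m}$ of $A_R v$. The hypothesis $\tau_R<-\delta_R-1$ forces $|\tau_R|>1\ge|m|$ on $K$, so $\tau_R+m<0$ throughout $K$; in particular the pole of $G$ at $m=-\tau_R$ lies outside $K$. Hence $G$ is smooth on $K$ with $G'(m)=\delta_R/(\tau_R+m)^2>0$, so $G$ is strictly increasing. Using $\tau_R-\lambda_R^s=\lambda_R^u$ and $\delta_R=\lambda_R^u\lambda_R^s$ one checks $G(|\lambda_R^s|)=|\lambda_R^s|$, while $G(m_{\rm min})>0\ge m_{\rm min}$ follows directly from the signs of the numerator and denominator. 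Monotonicity then gives $G(K)\subset[G(m_{\rm min}),|\lambda_R^s|]\subset K$.

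For expansion I would introduce, as in the previous proof, $H(m)=\|A_R v\|^2-c^2\|v\|^2=t^2\bigl((1-c^2)m^2+2\tau_R m+\tau_R^2+\delta_R^2-c^2\bigr)$, which is a concave-down quadratic in $m$ whenever $c>1$. It therefore suffices to check $H\ge 0$ at the endpoints $m=-1$ and $m=|\lambda_R^s|$ of the enveloping interval $[-1,|\lambda_R^s|]\supset K$. At $m=|\lambda_R^s|$ the ratio $\|A_R v\|^2/\|v\|^2$ equals $\lambda_R^{u^2}$ by the eigenvector computation, so $c^2\le\lambda_R^{u^2}$ makes $H(|\lambda_R^s|)\ge 0$. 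At $m=-1$ the ratio is $\tfrac12\bigl((|\tau_R|+1)^2+\delta_R^2\bigr)$, and the identity $|\tau_R|=|\lambda_R^u|+|\lambda_R^s|\ge|\lambda_R^u|$ gives the bound $>\tfrac12(|\lambda_R^u|+1)^2$, so $c^2\le(|\lambda_R^u|+1)^2/2$ makes $H(-1)\ge 0$. Taking $c=c_R$ handles both endpoints simultaneously, and concavity extends $H\ge 0$ to all of $K$.

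I do not anticipate a genuine obstacle; the only point requiring care is bookkeeping of signs, since the earlier proof was written in terms of the positive quantities $\lambda_L^u,\lambda_L^s$ and here both eigenvalues of $A_R$ are negative. The single structural ingredient one must verify carefully is that $|\tau_R|>1$, so that the pole of $G$ and the concavity-based interval argument for $H$ both take place strictly inside the regime where the formulas are valid.
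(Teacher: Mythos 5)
Your proposal is correct, and it is exactly the argument the paper intends: the paper omits the proof of Lemma~\ref{le:AR}, stating only that it "can be obtained in the same way" as Lemma~\ref{le:AL}\ref{it:AL2}, and your sign bookkeeping (using $\tau_R=\lambda_R^u+\lambda_R^s$ with both eigenvalues negative, the slope map $G(m)=\frac{-\delta_R}{\tau_R+m}$, and the concave-down quadratic $H$ evaluated at $m=-1$ and $m=|\lambda_R^s|$) supplies that analogous argument without error. No gaps.
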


Next we provide a simple lower bound for the value of $m_{\rm crit}$.
We then use this, and Lemmas \ref{le:AL} and \ref{le:AR},
to construct a cone that is invariant and expanding for both $A_L$ and $A_R$.

\begin{lemma}
Let $\xi \in \Phi_{\rm BYG}$.
Then $m_{\rm crit} > 2 \delta_R$.
\label{le:mCrit}
\end{lemma}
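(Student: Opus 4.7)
The plan is to derive an explicit upper bound on $\delta_R$ from the condition $\phi(\xi)>0$ and then show that this bound is itself smaller than $m_{\rm crit}$. It will be convenient to work entirely in terms of the eigenvalues $\lambda_L^u$ and $\lambda_L^s$, using the identities $\tau_L=\lambda_L^u+\lambda_L^s$ and $\delta_L=\lambda_L^u\lambda_L^s$, together with the fact that $\lambda_L^u>1$ (so $\lambda_L^{u^2}-1>0$) and $0<\lambda_L^s<1$ on $\Phi$.

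First I would rewrite $\phi$ in a factored form. A direct calculation using $\tau_L=\lambda_L^u+\lambda_L^s$ and $\delta_L=\lambda_L^u\lambda_L^s$ gives
\begin{equation}
\phi(\xi)=\lambda_L^{u^2}(1-\lambda_L^s)-(\lambda_L^u-1)\bigl(\delta_R-\tau_R\lambda_L^u\bigr),
\nonumber
\end{equation}
so the assumption $\phi(\xi)>0$ is equivalent to
\begin{equation}
\lambda_L^{u^2}(1-\lambda_L^s)>(\lambda_L^u-1)\bigl(\delta_R-\tau_R\lambda_L^u\bigr).
\nonumber
\end{equation}
The inequality $\tau_R<-(\delta_R+1)$ from $\Phi$ then gives the further lower bound $\delta_R-\tau_R\lambda_L^u>\delta_R+\lambda_L^u(\delta_R+1)=(\lambda_L^u+1)\delta_R+\lambda_L^u$. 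Combining these and rearranging produces
\begin{equation}
(\lambda_L^{u^2}-1)\delta_R<\lambda_L^u\bigl(1-\delta_L\bigr).
\label{eq:plan1}
\end{equation}
Notice \eqref{eq:plan1} implicitly forces $\delta_L<1$ on $\Phi_{\rm BYG}$, which is consistent with the fact that a useful upper bound on $\delta_R$ should exist.

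Next I would write $m_{\rm crit}$ over the common denominator $\lambda_L^{u^2}-1$. Substituting $\tau_L=\lambda_L^u+\lambda_L^s$ into \eqref{eq:mCrit} gives
\begin{equation}
m_{\rm crit}=\frac{\lambda_L^s\lambda_L^{u^2}+\lambda_L^s+2\lambda_L^u}{\lambda_L^{u^2}-1}.
\nonumber
\end{equation}
In view of \eqref{eq:plan1}, to conclude $m_{\rm crit}>2\delta_R$ it suffices to prove
\begin{equation}
\lambda_L^s\lambda_L^{u^2}+\lambda_L^s+2\lambda_L^u>2\lambda_L^u(1-\delta_L).
\nonumber
\end{equation}
Using $\delta_L=\lambda_L^u\lambda_L^s$, this reduces algebraically to $\lambda_L^s\bigl(3\lambda_L^{u^2}+1\bigr)>0$, which is immediate from $\lambda_L^s,\lambda_L^u>0$. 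Chaining the inequalities then gives $m_{\rm crit}>2\delta_R$.

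There is no real obstacle here; the only care required is to choose the right moment to invoke $\tau_R<-(\delta_R+1)$. Using merely $\tau_R<0$ in place of $\tau_R<-(\delta_R+1)$ would produce an upper bound on $\delta_R$ that is too weak by a factor of roughly $\lambda_L^u+1$ and would not clinch $m_{\rm crit}>2\delta_R$. So the slightly sharper form $\delta_R-\tau_R\lambda_L^u>(\lambda_L^u+1)\delta_R+\lambda_L^u$ is the step that actually drives the proof.
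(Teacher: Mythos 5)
Your proof is correct --- I checked each step: the factorisation $\phi(\xi)=\lambda_L^{u^2}(1-\lambda_L^s)-(\lambda_L^u-1)(\delta_R-\tau_R\lambda_L^u)$ is right, the bound $\delta_R-\tau_R\lambda_L^u>(\lambda_L^u+1)\delta_R+\lambda_L^u$ follows from $\tau_R<-(\delta_R+1)$ and $\lambda_L^u>0$, the combination does yield $(\lambda_L^{u^2}-1)\delta_R<\lambda_L^u(1-\delta_L)$, and the final comparison reduces to $\lambda_L^s\bigl(3\lambda_L^{u^2}+1\bigr)>0$ as you claim. However, your route is genuinely different from the paper's. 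The paper treats $m_{\rm crit}$ as a function of $(\tau_L,\delta_L)$, shows by differentiation that it is decreasing in $\tau_L$, and then uses facts imported from \cite{GhSi21} about the geometry of $\Phi_{\rm BYG}$ --- namely that the curve $\phi(\xi)=0$ has positive slope and meets $\tau_R=-\delta_R-1$ at $\tau_L=\lambda^*+\delta_L/\lambda^*$ --- to reduce the claim to showing $m_{\rm crit}^*(\delta_L,\delta_R)>2\delta_R$ for the boundary value $m_{\rm crit}^*$; this in turn is handled by a second monotonicity argument in $\delta_L$ with equality at $\delta_L=0$. In effect the paper optimises $m_{\rm crit}$ over the closure of the region and evaluates at the corner where $\phi=0$ meets $\tau_R=-\delta_R-1$, which is exactly the corner your two inequalities exploit; but you reach the conclusion by direct inequality-chaining from $\phi(\xi)>0$ and $\tau_R<-(\delta_R+1)$, with no calculus, no appeal to the slope of $\phi(\xi)=0$, and no need for the quadratic defining $\lambda^*$. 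Your argument is shorter, more self-contained, and produces the explicit intermediate bound $2\delta_R<\frac{2\lambda_L^u(1-\delta_L)}{\lambda_L^{u^2}-1}$, which also recovers the constraint $\delta_L<1$ as a by-product; the paper's argument, while longer, makes the extremal structure of the region visible, which is useful elsewhere in the paper (e.g.\ in Fig.~\ref{fig:zd_Slices}).
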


\begin{proof}
Treat $m_{\rm crit}$ as a function of $\tau_L$ and $\delta_L$.
By directly differentiating \eqref{eq:mCrit} we obtain
\begin{equation}
\frac{\partial m_{\rm crit}}{\partial \tau_L} = 
-\left[ \frac{\delta_L}{\lambda_L^{u^2}}
+ \frac{2 \big( \lambda_L^{u^2} + 1 \big)}{\left( \lambda_L^{u^2} - 1 \right)^2}
+ \frac{2 \delta_L \big( 3 \lambda_L^{u^2} - 1 \big)}{\lambda_L^{u^2} \left( \lambda_L^{u^2} - 1 \right)^2} \right]
\frac{\partial \lambda_L^u}{\partial \tau_L}.
\nonumber
\end{equation}
By inspection the quantity in square brackets is positive (because $\delta_L > 0$ and $\lambda_L^u > 1$).
Also $\frac{\partial \lambda_L^u}{\partial \tau_L} > 0$,
therefore, for any fixed $\delta_L > 0$, $m_{\rm crit}$ is a decreasing function of $\tau_L$.

As shown in \cite{GhSi21}, in the $(\tau_L,\tau_R)$-plane the curve $\phi(\xi) = 0$ intersects
$\tau_R = -\delta_R - 1$ at $\tau_L = \lambda^* + \frac{\delta_L}{\lambda^*}$,
where $\lambda^*$ is the larger of the two solutions to
\begin{equation}
-\delta_R \lambda^2 + (1-\delta_L) \lambda + \delta_R = 0,
\label{eq:lambdaStar}
\end{equation}
as indicated in Fig.~\ref{fig:zd_Slices}.
Since the curve $\phi(\xi) = 0$ has positive slope everywhere \cite{GhSi21},
for any $\xi \in \Phi_{\rm BYG}$ the value of $\tau_L$ is less than $\lambda^* + \frac{\delta_L}{\lambda^*}$.
So, since $m_{\rm crit}$ is a decreasing function of $\tau_L$,
$m_{\rm crit} > m_{\rm crit}^*(\delta_L,\delta_R)$ where
\begin{equation}
m_{\rm crit}^*(\delta_L,\delta_R) =
\frac{\delta_L}{\lambda^*} + \frac{2 \left( \lambda^* + \frac{\delta_L}{\lambda^*} \right)}{\lambda^{*^2} - 1},
\nonumber
\end{equation}
is obtained by replacing $\tau_L$ in \eqref{eq:mCrit} with $\lambda^* + \frac{\delta_L}{\lambda^*}$.

It remains to show that $m_{\rm crit}^*(\delta_L,\delta_R) > 2 \delta_R$ for all $0 < \delta_L < 1$ and $\delta_R > 0$
because to have $\xi \in \Phi_{\rm BYG}$ we must have $\delta_L < 1$ \cite{GhSi21}.
First observe $m_{\rm crit}^*(0,\delta_R) = \frac{2 \lambda^*}{\lambda^{*^2} - 1}$
and in view of \eqref{eq:lambdaStar} this reduces to $m_{\rm crit}^*(0,\delta_R) = 2 \delta_R$.
Also
\begin{equation}
\frac{\partial m_{\rm crit}^*}{\partial \delta_L} = 
\frac{1}{\alpha} + \frac{2}{\alpha \left( \alpha^2 - 1 \right)}
- \frac{\frac{\partial \lambda^*}{\partial \delta_L}}{\lambda^{*^2} \left( \lambda^{*^2} - 1 \right)^2}
\left[ \delta_L \left( \lambda^{*^4} + 4 \lambda^{*^2} - 1 \right)
+ 2 \lambda^{*^2} \left( \lambda^{*^2} + 1 \right) \right].
\nonumber
\end{equation}
By inspection the quantity in square brackets is positive (because $\delta_L > 0$ and $\lambda^* > 1$).
It is a simple exercise to show $\frac{\partial \lambda^*}{\partial \delta_L} < 0$,
therefore, for any fixed $\delta_R > 0$, $m_{\rm crit}^*$ is an increasing function of $\delta_L$.
Thus $m_{\rm crit}^*(\delta_L,\delta_R) > m_{\rm crit}^*(0,\delta_R) = 2 \delta_R$, as required.
\end{proof}

\begin{lemma}
Let $\xi \in \Phi_{\rm BYG}$.
With $K = \left[ -\lambda_L^s, |\lambda_R^s| \right]$, the cone $\Psi_K$ is invariant for $A_L$ and $A_R$.
Moreover, it is expanding for $A_L$ with expansion factor $\lambda_L^u$
and expanding for $A_R$ with expansion factor
$c_R = {\rm min} \left\{ |\lambda_R^u|, \frac{|\lambda_R^u| + 1}{\sqrt{2}} \right\}$.
\label{le:K}
\end{lemma}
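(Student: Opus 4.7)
The plan is to derive Lemma \ref{le:K} as a direct combination of the two cone lemmas already established, Lemma \ref{le:AL}\ref{it:AL1} for $A_L$ and Lemma \ref{le:AR} for $A_R$, so the entire argument reduces to checking that the chosen interval $K = \left[ -\lambda_L^s, |\lambda_R^s| \right]$ satisfies the hypotheses of both lemmas simultaneously.

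For the $A_L$ statement I would invoke Lemma \ref{le:AL}\ref{it:AL1} with $m_{\max} = |\lambda_R^s|$, which requires the inequality $|\lambda_R^s| \le m_{\rm crit}$. To establish it I would use that $\det(A_R) = \delta_R$ forces $\lambda_R^s \lambda_R^u = \delta_R$, so
\begin{equation}
|\lambda_R^s| = \frac{\delta_R}{|\lambda_R^u|} < \delta_R,
\nonumber
\end{equation}
since $|\lambda_R^u| > 1$ for any $\xi \in \Phi$. Lemma \ref{le:mCrit} gives $m_{\rm crit} > 2 \delta_R$, and chaining these yields $|\lambda_R^s| < \delta_R < 2 \delta_R < m_{\rm crit}$, as needed. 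Lemma \ref{le:AL}\ref{it:AL1} then immediately delivers invariance of $\Psi_K$ under $A_L$ and the expansion factor $\lambda_L^u$.

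For the $A_R$ statement I would apply Lemma \ref{le:AR} with $m_{\min} = -\lambda_L^s$. The only hypothesis to verify is $m_{\min} \in [-1, 0]$, and this is immediate from $0 < \lambda_L^s < 1$, which holds throughout $\Phi$ as noted in \S\ref{sec:results}. The conclusion of Lemma \ref{le:AR} then gives invariance of $\Psi_K$ under $A_R$ together with the expansion factor $c_R$.

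There is no real obstacle here: both preceding cone lemmas do the analytic work, and the content of the present lemma is that the particular interval $K = \left[ -\lambda_L^s, |\lambda_R^s| \right]$ is simultaneously admissible for both. The only nontrivial input is the bound $m_{\rm crit} > 2 \delta_R$ from Lemma \ref{le:mCrit}, which is precisely what ensures the upper endpoint $|\lambda_R^s|$ of $K$ is within the admissible range for $A_L$; the corresponding check for the lower endpoint is trivial.
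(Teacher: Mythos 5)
Your proposal is correct and follows essentially the same route as the paper: the paper's proof likewise observes $|\lambda_R^s| < \delta_R < 2\delta_R < m_{\rm crit}$ via Lemma \ref{le:mCrit} to apply Lemma \ref{le:AL}\ref{it:AL1}, and uses $-1 < -\lambda_L^s$ to apply Lemma \ref{le:AR}. Your explicit justification of $|\lambda_R^s| < \delta_R$ via $\lambda_R^s \lambda_R^u = \delta_R$ is a minor elaboration the paper leaves implicit.
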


\begin{proof}
Observe $|\lambda_R^s| < \delta_R < 2 \delta_R < m_{\rm crit}$ using Lemma \ref{le:mCrit}.
Thus the right end-point of $K$ is bounded above by $m_{\rm crit}$,
hence the result for $A_L$ follows from Lemma \ref{le:AL}\ref{it:AL1}.
Also $-1 < -\lambda_L^s$ thus the result for $A_R$ follows from Lemma \ref{le:AR}.
\end{proof}

\section{Expanding line segments}
\label{sec:lineSegments}
\setcounter{equation}{0}

In this section we use invariant expanding cones to study how line segments map under $f_\xi$
and to prove Theorem \ref{th:stableManifoldDense}.

Let $\alpha = \lineSeg{P}{Q} \subset \mathbb{R}^2$ be a line segment and write $Q = P + v$.
The length of $\alpha$ is $|\alpha| = \| v \|$.
We say $\alpha$ {\em crosses} $\Sigma$
if it contains points on both sides of $\Sigma$.
If $\alpha$ does not cross $\Sigma$,
then all points in $\alpha$ map under the same piece of $f_\xi$.
In this case $f_\xi(\alpha)$ is another line segment and 
\begin{equation}
|f_\xi(\alpha)| = \left\| A_J v \right\|,
\label{eq:length}
\end{equation}
with $J = L$ or $J = R$ accordingly.

If $\alpha$ crosses $\Sigma$,
then $f_\xi(\alpha)$ consists of two line segments connected at a point on $f_\xi(\Sigma)$.
The next result (inspired by Lemma 2.1 of \cite{VeGl90})
provides a lower bound on the length of the longer of these two segments.

\begin{lemma}
Let $s$ be the length of the longest line segment in $f_\xi(\alpha)$ and let $c_L, c_R > 0$.
If $\| A_L v \| \ge c_L \| v \|$
and $\| A_R v \| \ge c_R \| v \|$,
then
\begin{equation}
s \ge \frac{c_L c_R}{c_L + c_R} \,|\alpha|.
\label{eq:longestLength}
\end{equation}
\label{le:longestLength}
\end{lemma}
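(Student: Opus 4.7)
The plan is a direct calculation combined with a simple optimization argument. Since $\alpha$ crosses $\Sigma$, there is a unique point $Z \in \alpha \cap \Sigma$ of the form $Z = P + t v$ for some $t \in (0,1)$. Without loss of generality, $P$ lies in the left half-plane and $Q$ in the right half-plane, so $f_\xi$ acts on $\lineSeg{P}{Z}$ by the left piece and on $\lineSeg{Z}{Q}$ by the right piece. By continuity, $f_\xi(\alpha)$ is the union of the two line segments $f_\xi(\lineSeg{P}{Z})$ and $f_\xi(\lineSeg{Z}{Q})$, joined at $f_\xi(Z) \in f_\xi(\Sigma)$.

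Next I would apply \eqref{eq:length} to each subsegment, noting that the displacement vectors of $\lineSeg{P}{Z}$ and $\lineSeg{Z}{Q}$ are $t v$ and $(1-t) v$ respectively. Using the hypotheses $\| A_L v \| \ge c_L \| v \|$ and $\| A_R v \| \ge c_R \| v \|$, together with the homogeneity of the norm, this gives
\begin{equation}
|f_\xi(\lineSeg{P}{Z})| = t \| A_L v \| \ge t \,c_L\, \| v \|,
\qquad
|f_\xi(\lineSeg{Z}{Q})| = (1-t) \| A_R v \| \ge (1-t)\, c_R\, \| v \|.
\nonumber
\end{equation}
Hence $s \ge \| v \| \cdot \max\{ t c_L,\, (1-t) c_R \}$.

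The final step is to bound the max from below uniformly in $t$. The function $t \mapsto \max\{ t c_L, (1-t) c_R \}$ is the pointwise maximum of an increasing and a decreasing linear function; it achieves its minimum at the crossing point $t^* = \frac{c_R}{c_L + c_R}$, where both expressions equal $\frac{c_L c_R}{c_L + c_R}$. Therefore $\max\{ t c_L, (1-t) c_R \} \ge \frac{c_L c_R}{c_L + c_R}$ for every $t \in (0,1)$, and combining with $\| v \| = |\alpha|$ yields \eqref{eq:longestLength}.

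There is no real obstacle here: the argument is a one-variable convex-combination optimisation, and the only point needing a small amount of care is verifying that the two subsegments really do map under distinct pieces of $f_\xi$ so that \eqref{eq:length} applies to each. This is immediate from the fact that $\lineSeg{P}{Z}$ and $\lineSeg{Z}{Q}$ each lie in a single closed half-plane on which $f_\xi$ is affine, and the two affine pieces agree on $\Sigma$ so continuity of $f_\xi(\alpha)$ is automatic.
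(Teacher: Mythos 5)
Your proposal is correct and follows essentially the same argument as the paper: split $\alpha$ at its crossing point with $\Sigma$ into pieces of lengths $t|\alpha|$ and $(1-t)|\alpha|$, bound each image length using the hypotheses, and minimise $\max\{t c_L, (1-t) c_R\}$ over $t$ at $t = \frac{c_R}{c_L + c_R}$. No meaningful differences.
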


\begin{proof}
Let $\alpha_L$ denote the part of $\alpha$ in the closed left half-plane
and $\alpha_R$ denote the part of $\alpha$ in the closed right half-plane.
Then $|\alpha_L| = t |\alpha|$ and $|\alpha_R| = (1-t) |\alpha|$ for some $t \in [0,1]$.
Observe $|f_\xi(\alpha_L)| = \left\| A_L (t v) \right\| \ge t c_L |\alpha|$
and $|f_\xi(\alpha_R)| = \left\| A_R ((1-t) v) \right\| \ge (1-t) c_R |\alpha|$.
Thus $s \ge {\rm max} \left\{ t c_L, (1-t) c_R \right\} |\alpha|$.
This length is smallest when $t c_L = (1-t) c_R$.
That is, $t = \frac{c_R}{c_L + c_R}$,
giving \eqref{eq:longestLength}.
\end{proof}

We are now able to show that the condition $J_1(\xi) > 1$ implies $\xi \in \cR_0$ for any $\xi \in \Phi_{\rm BYG}$.
This is because if $\xi \in \Phi_{\rm BYG}$ then $\phi(\xi) > 0$,
so if also $\phi(g(\xi)) < 0$ then $\xi \in \cR_0$ by definition, \eqref{eq:Rn}.

\begin{lemma}
Let $\xi \in \Phi$.
If $J_1(\xi) > 1$ then $\phi(g(\xi)) < 0$.
\label{le:Zexists}
\end{lemma}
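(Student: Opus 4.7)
The plan is a direct symbolic computation of $\phi(g(\xi))$ followed by a short estimate that uses the hypothesis $J_1(\xi) > 1$. The only preparatory fact needed is the identity $\lambda_L^u(g(\xi)) = \lambda_R^{u^2}$: indeed $A_L(g(\xi))$ has trace $\tau_R^2 - 2\delta_R$ and determinant $\delta_R^2$, which are the elementary symmetric functions of $\lambda_R^{u^2}$ and $\lambda_R^{s^2}$, so the two eigenvalues of $A_L(g(\xi))$ are the squares of the eigenvalues of $A_R(\xi)$, and $\lambda_R^{u^2}$ is the larger one because $|\lambda_R^u| > 1 > |\lambda_R^s|$.

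Substituting $\lambda_L^u(g(\xi)) = \lambda_R^{u^2}$, $\tau_R(g(\xi)) = \tau_L\tau_R - \delta_L - \delta_R$, $\delta_L(g(\xi)) = \delta_R^2$, and $\delta_R(g(\xi)) = \delta_L\delta_R$ into \eqref{eq:phi}, and expressing the original parameters in terms of eigenvalues via $\tau_L = \lambda_L^u + \lambda_L^s$, $\delta_L = \lambda_L^u\lambda_L^s$, $\tau_R = -(|\lambda_R^u| + |\lambda_R^s|)$, $\delta_R = |\lambda_R^u||\lambda_R^s|$, I aim (using the shorthand $L=\lambda_L^u$, $l=\lambda_L^s$, $R=|\lambda_R^u|$, $r=|\lambda_R^s|$) at the factored form
\begin{equation}
\phi(g(\xi)) \;=\; R(R+r)\Big[R^3(R-r) \,-\, (R^2-1)(L+R)(R+l)\Big]. \nonumber
\end{equation}
The decisive and non-obvious simplification is that $\delta_R(g(\xi)) - \lambda_L^u(g(\xi))\,\tau_R(g(\xi))$ collapses to the product $R(L+R)(R+l)(R+r) - R^4$; recognising this factorisation is the main obstacle, and in practice I would verify it by expanding both sides and matching coefficients.

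Once the factored form is in hand the conclusion is short. The prefactor $R(R+r)$ is positive, so it suffices to show that the bracket is negative. The hypothesis $J_1(\xi) > 1$ is exactly $L(R^2-1) > R$, hence
\begin{equation}
(R^2-1)(L+R) \;=\; L(R^2-1) + R(R^2-1) \;>\; R + R(R^2-1) \;=\; R^3. \nonumber
\end{equation}
Multiplying by the positive number $R+l$ and using $R+l > R-r$ (which holds because $l,r > 0$) gives $(R^2-1)(L+R)(R+l) > R^3(R+l) > R^3(R-r)$, so the bracket is strictly negative and $\phi(g(\xi)) < 0$, as required.
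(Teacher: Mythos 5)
Your proof is correct, and it takes a genuinely different route from the paper's. The paper proves the contrapositive geometrically: assuming $\phi(g(\xi)) \ge 0$, it locates $f_\xi^2(T)$ (where $T$ is the first kink of $W^u(X)$) in a bounded region every point of which is closer to $T$ than $X$ is, using the invariant cone of Lemma \ref{le:K} and two facts imported from \cite{GhSi21}, and then extracts $J_1(\xi) \le 1$ from the segment-growth estimate of Lemma \ref{le:longestLength} applied to $\alpha = \lineSeg{X}{f_\xi(T)}$. You instead compute $\phi(g(\xi))$ in closed form. I have checked your two key algebraic claims: the eigenvalues of $A_L$ at $g(\xi)$ are indeed $\lambda_R^{u^2}$ and $\lambda_R^{s^2}$, and, writing $L=\lambda_L^u$, $l=\lambda_L^s$, $R=|\lambda_R^u|$, $r=|\lambda_R^s|$, one has $\delta_R(g(\xi)) - \lambda_L^u(g(\xi))\,\tau_R(g(\xi)) = LlRr + R^2\big((L+l)(R+r)+Ll+Rr\big) = R(R+r)(L+R)(R+l) - R^4$, which yields exactly your factorisation $\phi(g(\xi)) = R(R+r)\big[R^3(R-r) - (R^2-1)(L+R)(R+l)\big]$; the concluding estimate from $L(R^2-1) > R$ (which is precisely $J_1(\xi)>1$) is then immediate and sound, since $R+l > R-r > 0$ follows from $l,r>0$ and $r<1<R$. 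Your route is self-contained --- it needs nothing from \cite{GhSi21} and no geometry --- and it yields as a by-product the exact eigenvalue characterisation $\phi(g(\xi)) < 0 \iff (R^2-1)(L+R)(R+l) > R^3(R-r)$, which makes visible how much slack the hypothesis $J_1(\xi) > 1$ leaves and is potentially useful for the conjecture raised in \S\ref{sec:conc}. What the paper's argument buys is economy of means: it reuses the cone and line-segment machinery that is needed anyway in \S\ref{sec:lineSegments}--\S\ref{sec:Devaney}, at the cost of leaning on external results and a geometric ``simple exercise''.
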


\begin{figure}[b!]
\begin{center}
\includegraphics[width=8cm]{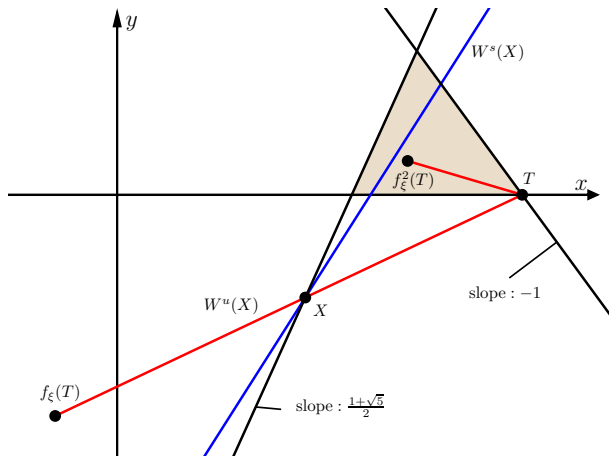}
\caption{
A sketch of the phase space of \eqref{eq:f} with $\xi \in \Phi$ and $\phi(g(\xi)) \ge 0$
to support the proof of Lemma \ref{le:Zexists}.
The shaded region is bounded by the $x$-axis,
the line through $X$ with slope $\frac{1 + \sqrt{5}}{2}$,
and the line through $T$ with slope $-1$.
\label{fig:zd_Proof}
} 
\end{center}
\end{figure}

\begin{proof}
Let $\xi \in \Phi$ and suppose $\phi(g(\xi)) \ge 0$.
It remains to show $J_1(\xi) \le 1$.
Let $T \in f_\xi(\Sigma)$ denote the first kink of $W^u(X)$ as we follow it outwards from $X$, see Fig.~\ref{fig:zd_Proof}.
As shown in \cite{GhSi21},
$\phi(g(\xi)) \ge 0$ implies $f_\xi^2(T)$ lies on or to the right of the stable subspace $E^s(X)$.
The slope of $E^s(X)$ is $\left| \lambda_R^u \right|$,
where $\left| \lambda_R^u \right| < \frac{1 + \sqrt{5}}{2}$ because $\phi(g(\xi)) \ge 0$, see \cite{GhSi21}.

The slope of $\lineSeg{T}{f_\xi(T)}$ is $|\lambda_R^s|$.
This slope belongs to the interval $K = \left[ -\lambda_L^s, |\lambda_R^s| \right]$ of Lemma \ref{le:K},
thus by the invariance of $\Psi_K$ the slope of
$\lineSeg{T}{f_\xi^2(T)}$ also belongs to this interval.
Thus the slope of $\lineSeg{T}{f_\xi^2(T)}$ is at least $-\lambda_L^s$, which is greater than $-1$.
Also $f_\xi^2(T)$ lies above $f_\xi(\Sigma)$ (the $x$-axis) because $f_\xi(T)$ lies to the left of $\Sigma$.
By putting these observations together we conclude that
$f_\xi^2(T)$ must belong to the shaded region of Fig.~\ref{fig:zd_Proof}.
It is then a simple exercise in geometry to show that
the distance of any point in this region to $T$
is less than the distance of $X$ to $T$.
Thus $f_\xi^2(T)$ is closer to $T$ than $X$ is.

Now let $\alpha = \lineSeg{X}{f_\xi(T)}$.
Then $f_\xi(\alpha) = \lineSeg{X}{T} \cup \lineSeg{T}{f_\xi^2(T)}$.
We have just shown that the longest line segment in $f_\xi(\alpha)$ is $\lineSeg{X}{T}$.
By Lemma \ref{le:longestLength} this has length
\begin{equation}
s \ge \frac{\lambda_L^u \left| \lambda_R^u \right|}{\lambda_L^u + \left| \lambda_R^u \right|} \,|\alpha|,
\label{eq:ZexistsProof}
\end{equation}
using also Lemma \ref{le:K} with $c_R = \left| \lambda_R^u \right|$
because $\left| \lambda_R^u \right| < \frac{1 + \sqrt{5}}{2}$.
Since $\lineSeg{X}{T}$ is aligned with the unstable direction of $X$,
$\left| f_\xi(\lineSeg{X}{T}) \right| = \left| \lambda_R^u \right| s$.
But $f_\xi(\lineSeg{X}{T}) = \alpha$, thus \eqref{eq:ZexistsProof} implies
$|\alpha| \ge \frac{\lambda_L^u \left| \lambda_R^u \right|^2}{\lambda_L^u + \left| \lambda_R^u \right|} \,|\alpha|$.
That is, $J_1(\xi) \le 1$.
\end{proof}

Next we prove that, under certain conditions, when we iterate a line segment
under $f_\xi$ we must eventually intersect $\lineSeg{V}{f_\xi^{-1}(V)}$ --- the initial linear piece of $W^s(X)$ as it emanates from $X$.
Then only a few more arguments are required to establish the denseness of $W^s(X)$.

\begin{lemma}
Let $\xi \in \Phi_{\rm BYG}$ and suppose $J_1(\xi) > 1$ and $\lambda_L^s + |\lambda_R^s| < 1$.
Let $\alpha \subset \Omega$ be a line segment with slope in $K = \left[ -\lambda_L^s, |\lambda_R^s| \right]$.
Then there exists $n \ge 1$ and points $P \in \Sigma$ and $Q \in f_\xi(\Sigma)$ such that $\lineSeg{P}{Q} \subset f_\xi^n(\alpha)$.
Moreover, $\lineSeg{P}{Q}$ intersects $\lineSeg{V}{f_\xi^{-1}(V)}$ transversally.
\label{le:intersectionWithBothAxes}
\end{lemma}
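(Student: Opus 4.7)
The plan is to iterate $\alpha$ under $f_\xi$ and exploit the invariant cone $\Psi_K$ until some linear piece of $f_\xi^n(\alpha)$ is long enough to span from a kink on $f_\xi(\Sigma)$ across $\Sigma$, producing the desired segment $\lineSeg{P}{Q}$. First, by Lemma~\ref{le:fOmega}, $\Omega$ is forward invariant, so every $f_\xi^n(\alpha)$ is a polygonal curve in $\Omega$; by Lemma~\ref{le:K}, each of its linear pieces has slope in $K$, and every kink, which is introduced only at images of points of $\Sigma$, lies on $f_\xi(\Sigma)$.

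Next I would show unbounded growth of the length of the longest linear piece of $f_\xi^n(\alpha)$. Lemma~\ref{le:K} gives a per-iteration expansion factor of at least $\lambda_L^u$ or $c_R$ for pieces contained in a single half-plane, while Lemma~\ref{le:longestLength} gives at least $\tfrac{\lambda_L^u c_R}{\lambda_L^u + c_R}$ for the longer of the two sub-pieces produced when a piece crosses $\Sigma$. Combining these estimates over pairs of consecutive iterations, in the spirit of the two-step scheme used in the proof of Lemma~\ref{le:Zexists}, the hypothesis $J_1(\xi) > 1$ ensures compounded factors that strictly exceed $1$ in the worst case, so the longest-piece length diverges.

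Because $\Omega$ is bounded and pieces have slope in $K \subset (-1,1)$ (note $\lambda_L^s, |\lambda_R^s| < 1$, since $\lambda_L^s + |\lambda_R^s| < 1$), a sufficiently long linear piece cannot lie in a single half of $\Omega$ and must cross $\Sigma$. I would choose $n \ge 1$ so that some linear piece $\sigma \subset f_\xi^n(\alpha)$ has length exceeding this threshold and has a kink endpoint on $f_\xi(\Sigma)$, which is automatic once $n \ge 2$ and the longest-piece length is large because then interior sub-pieces of the curve have both endpoints as kinks. Then $\sigma$ crosses $\Sigma$ at some point $P$; taking $Q$ to be the kink endpoint of $\sigma$ lying on the opposite side of $\Sigma$ from the other end, one obtains $\lineSeg{P}{Q} \subset \sigma \subset f_\xi^n(\alpha)$, which delivers the first conclusion.

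Finally, $\lineSeg{V}{f_\xi^{-1}(V)}$ lies on the stable subspace $E^s(X)$, whose slope $|\lambda_R^u|$ exceeds $1$, whereas $\lineSeg{P}{Q}$ has slope in $K \subset (-1,1)$; the two segments are never parallel, so any intersection is transverse. To see that an intersection actually occurs, I would invoke Lemma~\ref{le:U2V2} to place $V$ strictly below $U$ on $\Sigma$, positioning $\lineSeg{V}{f_\xi^{-1}(V)}$ in the interior of $f_\xi(\Omega)$; the nearly horizontal $\lineSeg{P}{Q}$, running from a point on $f_\xi(\Sigma)$ to a point on $\Sigma$ inside this same region, is then forced to cross the steep segment $\lineSeg{V}{f_\xi^{-1}(V)}$. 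The main obstacle will be rigorously translating the two-step scheme of Lemma~\ref{le:Zexists} into monotone length growth for arbitrary cone-aligned segments, and making the final geometric forcing argument quantitative using the explicit coordinates of $D$, $f_\xi(D)$, $U$, $V$, and $f_\xi^{-1}(V)$.
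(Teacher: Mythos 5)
Your overall strategy --- iterate $\alpha$, use the invariant cone of Lemma \ref{le:K} together with Lemma \ref{le:longestLength} to force length growth, and conclude from the boundedness of $\Omega$ that a crossing of $\Sigma$ must eventually occur --- is the same as the paper's, but two steps do not hold as you have written them. The first is the growth estimate. You assert that $J_1(\xi)>1$ makes the worst-case compounded two-step factor exceed $1$, but this does not follow. After a piece crosses $\Sigma$, Lemma \ref{le:longestLength} bounds the longer sub-piece below by $\frac{\lambda_L^u c_R}{\lambda_L^u+c_R}$ times the original length; if that sub-piece then sits in the \emph{left} half-plane the next expansion factor is only $\lambda_L^u$, giving a two-step factor $\frac{\lambda_L^{u^2} c_R}{\lambda_L^u+c_R}$, which can be below $1$ while $J_1(\xi)>1$ (take $\lambda_L^u$ close to $1$ and $|\lambda_R^u|$ the corresponding root of $J_1=1$; then $J_1>1$ but the left-branch factor is roughly $0.6$). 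The hypothesis $J_1(\xi)>1$ only controls the case where the second factor is $c_R$, i.e.\ where the longer sub-piece lies in the closed right half-plane. Establishing that localization is precisely where $\lambda_L^s+|\lambda_R^s|<1$ and Lemma \ref{le:U2V2} must enter the \emph{growth} argument, not just the final intersection argument: they show $f_\xi(U)$ lies to the right of $\Sigma$, so the freshly created kink, which lies on $f_\xi(\Omega)\cap f_\xi(\Sigma)=\lineSeg{f_\xi(U)}{D}$, is in the right half-plane, and hence a non-crossing longer sub-piece is confined to the closed right half-plane and expands by $c_R$.

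The second gap concerns the endpoint $Q$. You claim that a long interior linear piece of $f_\xi^n(\alpha)$ automatically has a kink endpoint on $f_\xi(\Sigma)$ because interior pieces end at kinks. Kinks are indeed created on $f_\xi(\Sigma)$, but they do not stay there: a kink created at step $k$ lies at its $(n-k)$-th image at step $n$, which is generically off $f_\xi(\Sigma)$. Only pieces cut at the final iteration have an endpoint on $f_\xi(\Sigma)$, so your choice of $Q$ is not justified. The paper avoids this by bookkeeping: immediately after a crossing it passes to the longer sub-piece $\beta_i$ (which has the fresh kink on $f_\xi(\Sigma)$ as an endpoint) and stops at the first $j$ for which $\beta_j$ itself crosses $\Sigma$, taking $Q$ to be that fresh-kink endpoint and $P=\beta_j\cap\Sigma$. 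Your closing paragraph on transversality is in the right spirit (non-parallel slopes do give transversality), but to place the intersection point on $\lineSeg{V}{f_\xi^{-1}(V)}$ rather than merely on $E^s(X)$ you still need the paper's localization of $P$ on or above $U$ and of $Q$ on or to the right of $f_\xi(U)$.
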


\begin{proof}
Let $\alpha_0 = \alpha$.
We construct a sequence $\{ \alpha_i \}$ of line segments in $\Omega$ with slopes in $K$ as follows.
If $\alpha_i$ does not cross $\Sigma$, let $\alpha_{i+1} = f_\xi(\alpha_i)$.
Observe $\alpha_{i+1} \subset \Omega$ because $\Omega$ is forward invariant under $f_\xi$ (Lemma \ref{le:fOmega}),
and $\alpha_{i+1}$ has slope in $K$ because $\Psi_K$ is invariant for both $A_L$ and $A_R$ (Lemma \ref{le:K}).
Moreover, with $c_R$ as in Lemma \ref{le:K},
$|\alpha_{i+1}| \ge d_0 |\alpha_i|$ where $d_0 = {\rm min} \left\{ \lambda_L^u, c_R \right\} > 1$.

If $\alpha_i$ crosses $\Sigma$, first let $\beta_i$ be the longer of the two line segments that comprise $f_\xi(\alpha_i)$.
If $\beta_i$ does not cross $\Sigma$, let $\alpha_{i+1} = f_\xi(\beta_i)$.
Both $\beta_i$ and $\alpha_{i+1}$ belong to $\Omega$ and have slopes in $K$ by the invariance properties.
We now show that the situation of $\beta_i$ not crossing $\Sigma$ cannot always occur.

The endpoint of $\beta_i$ that lies on $f_\xi(\Sigma)$ belongs the right half-plane.
This is because $\beta_i \subset f_\xi(\Omega)$ and the intersection of $f_\xi(\Omega)$ with $f_\xi(\Sigma)$
is the line segment with endpoints $f_\xi(U)$ and $D$, see Lemma \ref{le:fOmega}.
But $U$ lies above $V$ (Lemma \ref{le:U2V2}) because we have assumed $\lambda_L^s + |\lambda_R^s| < 1$,
thus $f_\xi(U)$ lies to the right of $f_\xi(V)$,
which certainly lies to the right of $\Sigma$ in view of the explicit expression \eqref{eq:V}.
So the assumption that $\beta_i$ does not cross $\Sigma$ means that $\beta_i$ lies in the closed right half-plane,
thus $|\alpha_{i+1}| \ge c_R |\beta_i|$ by Lemma \ref{le:K}.
Also $|\beta_i| \ge \frac{\lambda_L^u c_R}{\lambda_L^u + c_R} |\alpha_i|$ by Lemma \ref{le:longestLength},
therefore $|\alpha_{i+1}| \ge d_1 |\alpha_i|$ where
$d_1 = \frac{\lambda_L^u c_R^2}{\lambda_L^u + c_R}$.

If $|\lambda_R^u| > \frac{1 + \sqrt{5}}{2}$ then $c_R > \frac{1 + \sqrt{5}}{2}$ and so $d_1 > 1$ (because $\lambda_L^u > 1$).
Otherwise $c_R = |\lambda_R^u|$ in which case $d_1 = J_1(\xi)$ and so again $d_1 > 1$ (because $J_1(\xi) > 1$ by assumption).
Now let $d = {\rm min} \{ d_0, d_1 \}$.
Then $|\alpha_n| \ge d^n |\alpha_0| \to \infty$ as $n \to \infty$ because $d > 1$,
but $\Omega$ is bounded so this is not possible.
Thus there must exist $j \ge 0$ such that both $\alpha_j$ and $\beta_j$ cross $\Sigma$.
Let $P = \beta_j \cap \Sigma$ and $Q$ be the endpoint of $\beta_j$ that lies on $f_\xi(\Sigma)$.
By construction $\lineSeg{P}{Q} \subset f_\xi^n(\alpha)$ for some $j + 1 \le n \le 2 j + 1$.

Finally, $\lineSeg{P}{Q} \subset f_\xi(\Omega)$
thus $P$ lies on or above $U$ while $Q$ lies on or to the right of $f_\xi(U)$, see Fig.~\ref{fig:zd_fOmega}.
We now use Lemma \ref{le:U2V2} to show that $\lineSeg{P}{Q}$ intersects $\lineSeg{V}{f_\xi^{-1}(V)}$ transversally.
Since $U$ lies above $V$, $P$ lies to the left of the stable subspace $E^s(X)$.
Since $f_\xi(U)$ lies to the right of $f_\xi(V)$, $Q$ lies to the right of $E^s(X)$.
Thus $\lineSeg{P}{Q}$ intersects $E^s(X)$ transversally at some point $S \in f_\xi(\Omega)$.
Finally $S \in \lineSeg{V}{f_\xi^{-1}(V)}$
because $f_\xi^{-1}(V)$ lies above the line through $f_\xi^{-1}(D)$ and $D$
(which is the case because $V$ lies below the line through $D$ and $f_\xi(D)$).
\end{proof}

\begin{proof}[Proof of Theorem \ref{th:stableManifoldDense}]
Since $X \in \Omega$, see \cite{GhSi21}, and $\Omega$ is compact and forward invariant under $f_\xi$,
we have $\Lambda \subset \Omega$.
Choose any $\tilde{P} \in \Omega$ and $\ee > 0$.
Let $\alpha \subset B_\ee(\tilde{P}) \cap \Omega$ be a line segment with slope in $\left[ -\lambda_L^s, |\lambda_R^s| \right]$.
By Lemma \ref{le:intersectionWithBothAxes} there exists $n \ge 1$ such that
$f_\xi^n(\alpha)$ intersects $W^s(X)$ at some point $S$.
Thus $\tilde{Q} = f_\xi^{-n}(S)$
belongs to $W^s(X)$ and lies within a distance $\ee$ of $\tilde{P}$.
Since $\tilde{P} \in \Omega$ and $\ee > 0$ are arbitrary,
we can conclude that $W^s(X)$ is dense in $\Omega$.
\end{proof}

\section{Dynamics of the inverse $f_\xi^{-1}$}
\label{sec:fInverse}
\setcounter{equation}{0}

In this section we identify invariant expanding cones for $A_L^{-1}$ and $A_R^{-1}$.
Similar calculations were done in \cite{Mi80} to verify uniform hyperbolicity for the Lozi map.
The results, Lemmas \ref{le:ALinverse} and \ref{le:ARinverse}, are analogous to the results obtained above in
\S\ref{sec:invariantCone} for $A_L$ and $A_R$ and can be proved in the same way.
We omit such proofs for brevity but instead provide a novel proof of Lemma \ref{le:ALinverse}
that works by converting $A_L^{-1}$, via a change of coordinates, to a matrix that has the same companion matrix form as $A_L$.
We also show that the set of points whose backward orbits under $f_\xi$ diverge is dense in $\mathbb{R}^2$,
Lemma \ref{le:XiDense}.
Consequently the attractor $\Lambda$ cannot contain open sets and this
observation is utilised in the next section.

Given an interval $K$, the set
\begin{equation}
\hat{\Psi}_K = \left\{ t \begin{bmatrix} \hat{m} \\ 1 \end{bmatrix} \,\middle|\, \hat{m} \in K, t \in \mathbb{R} \right\},
\label{eq:PsiHatK}
\end{equation}
is a cone.

\begin{lemma}
Suppose $\tau_L > \delta_L + 1$ and $\delta_L > 0$.
Let $K = \left[ -\frac{1}{\lambda_L^u}, \hat{m}_{\rm max} \right]$ for some $\hat{m}_{\rm max} \in [0,1]$.
The cone $\hat{\Psi}_K$ is invariant and expanding for $A_L^{-1}$ with expansion factor
$\hat{c}_L = {\rm min} \left\{ \frac{1}{\lambda_L^s}, \frac{1 + \lambda_L^s}{\sqrt{2} \lambda_L^s} \right\}$.
\label{le:ALinverse}
\end{lemma}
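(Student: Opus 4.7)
The plan is to reduce the claim to Lemma~\ref{le:AL}\ref{it:AL2} via a linear change of coordinates, as hinted in the preamble to this section. I would introduce the orthogonal involution
\begin{equation*}
S = \begin{bmatrix} 0 & 1 \\ 1 & 0 \end{bmatrix},
\end{equation*}
and verify by direct computation that
\begin{equation*}
\tilde{A}_L := S A_L^{-1} S^{-1} = \begin{bmatrix} \tau_L/\delta_L & 1 \\ -1/\delta_L & 0 \end{bmatrix},
\end{equation*}
which has the same companion structure as $A_L$ but with trace $\tilde{\tau} = \tau_L/\delta_L$ and determinant $\tilde{\delta} = 1/\delta_L$. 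The hypothesis $\tau_L > \delta_L + 1$ translates directly into $\tilde{\tau} > \tilde{\delta} + 1$, and $\tilde{\delta} > 0$ is immediate, so Lemma~\ref{le:AL} applies to $\tilde{A}_L$. Since the eigenvalues of $\tilde{A}_L$ are the reciprocals of those of $A_L$, its stable and unstable eigenvalues are $\tilde{\lambda}^s = 1/\lambda_L^u$ and $\tilde{\lambda}^u = 1/\lambda_L^s$ respectively.

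Next I would transport the cone and expansion estimates through $S$. Because $S$ swaps the two components, it sends $\begin{bmatrix} \hat{m} \\ 1 \end{bmatrix}$ to $\begin{bmatrix} 1 \\ \hat{m} \end{bmatrix}$, so $S \hat{\Psi}_K = \Psi_K$ for every interval $K$. The given interval $K = [-1/\lambda_L^u, \hat{m}_{\rm max}] = [-\tilde{\lambda}^s, \hat{m}_{\rm max}]$ with $\hat{m}_{\rm max} \in [0,1]$ is precisely of the form required by Lemma~\ref{le:AL}\ref{it:AL2} applied to $\tilde{A}_L$, and that lemma then yields $\tilde{A}_L \Psi_K \subset \Psi_K$ together with the expansion bound $\|\tilde{A}_L w\| \ge \tilde{c}_L \|w\|$ on $\Psi_K$, where
\begin{equation*}
\tilde{c}_L = \min\left\{ \tilde{\lambda}^u, \frac{\tilde{\lambda}^u + 1}{\sqrt{2}} \right\} = \min\left\{ \frac{1}{\lambda_L^s}, \frac{1 + \lambda_L^s}{\sqrt{2}\,\lambda_L^s} \right\} = \hat{c}_L.
\end{equation*}

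Finally, since $A_L^{-1} = S \tilde{A}_L S$ and $S$ is an orthogonal involution, both statements pull back to $A_L^{-1}$ without loss: I would write
\begin{equation*}
A_L^{-1} \hat{\Psi}_K = S \tilde{A}_L (S \hat{\Psi}_K) = S \tilde{A}_L \Psi_K \subset S \Psi_K = \hat{\Psi}_K,
\end{equation*}
and $\|A_L^{-1} v\| = \|\tilde{A}_L (Sv)\| \ge \tilde{c}_L \|Sv\| = \hat{c}_L \|v\|$ for every $v \in \hat{\Psi}_K$, which gives the lemma. I do not anticipate a substantive obstacle; the only point requiring care is that the swap of coordinates interchanges the roles of the slope-coordinate and the unit-height coordinate in the cone parametrisation, so one must confirm that the endpoint hypothesis $\hat{m}_{\rm max} \in [0,1]$ on $\hat{\Psi}_K$ matches the condition $m_{\rm max} \in [0,1]$ needed by Lemma~\ref{le:AL}\ref{it:AL2} (which it does).
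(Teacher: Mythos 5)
Your proposal is correct and follows essentially the same route as the paper: conjugating $A_L^{-1}$ by the coordinate swap to obtain a matrix in the companion form of $A_L$ with trace $\tau_L/\delta_L$ and determinant $1/\delta_L$, applying Lemma \ref{le:AL}\ref{it:AL2}, and transporting the cone and expansion bound back through the (orthogonal) swap. The only difference is that you verify the hypothesis $\tilde{\tau} > \tilde{\delta} + 1$ and the identity $S\hat{\Psi}_K = \Psi_K$ explicitly, details the paper leaves implicit.
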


\begin{proof}
Let $P = \begin{bmatrix} 0 & 1 \\ 1 & 0 \end{bmatrix}$ and let
\begin{equation}
B_L = P A_L^{-1} P^{-1} = \begin{bmatrix} \frac{\tau_L}{\delta_L} & 1 \\ -\frac{1}{\delta_L} & 0 \end{bmatrix}.
\nonumber
\end{equation}
This matrix has the form of $A_L$ with eigenvalues $0 < \frac{1}{\lambda_L^u} < 1 < \frac{1}{\lambda_L^s}$
(the same eigenvalues as $A_L^{-1}$ by similarity).
By Lemma \ref{le:AL}\ref{it:AL2}, $\Psi_K$ is invariant and expanding for $B_L$ with expansion factor $\hat{c}_L$.
Multiplication by $P$ corresponds to a reflection about the line $y=x$,
therefore $\hat{\Psi}_K$ (the reflection of $\Psi_K$ about $y=x$) is invariant and expanding for $A_L^{-1}$ with same expansion factor.
\end{proof}

\begin{lemma}
Suppose $\tau_R < -\delta_R - 1$ and $\delta_R > 0$.
Let $K = \left[ \hat{m}_{\rm min}, \frac{1}{|\lambda_R^u|} \right]$ for some $\hat{m}_{\rm min} \in [-1,0]$.
The cone $\hat{\Psi}_K$ is invariant and expanding for $A_R^{-1}$ with expansion factor
$\hat{c}_R = {\rm min} \left\{ \frac{1}{|\lambda_R^s|}, \frac{1 + |\lambda_R^s|}{\sqrt{2} |\lambda_R^s|} \right\}$.
\label{le:ARinverse}
\end{lemma}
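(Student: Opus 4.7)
The plan is to reuse the conjugation trick from the proof of Lemma \ref{le:ALinverse}, now applied to the right-hand Jacobian. I would first take the permutation matrix $P = \begin{bmatrix} 0 & 1 \\ 1 & 0 \end{bmatrix}$ (which satisfies $P^{-1} = P$) and compute
\begin{equation}
B_R = P A_R^{-1} P^{-1} = \begin{bmatrix} \tau_R/\delta_R & 1 \\ -1/\delta_R & 0 \end{bmatrix}.
\nonumber
\end{equation}
Thus $B_R$ has exactly the companion-matrix shape of $A_R$, with $\tau_R$ replaced by $\tau_R/\delta_R$ and $\delta_R$ replaced by $1/\delta_R$. Dividing the hypothesis $\tau_R < -\delta_R - 1$ by $\delta_R > 0$ gives $\tau_R/\delta_R < -1/\delta_R - 1$, so $B_R$ satisfies the sign/trace conditions of Lemma \ref{le:AR}.

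Next I would identify the eigenvalues. Since $B_R$ is similar to $A_R^{-1}$, they are $1/\lambda_R^s$ and $1/\lambda_R^u$, and the assumption $\lambda_R^u < -1 < \lambda_R^s < 0$ forces $1/\lambda_R^s < -1 < 1/\lambda_R^u < 0$. So, in Lemma \ref{le:AR} applied to $B_R$, the role of the expanding eigenvalue ``$\lambda_R^u$'' is played by $1/\lambda_R^s$ and the role of the contracting eigenvalue ``$\lambda_R^s$'' is played by $1/\lambda_R^u$. In particular the right endpoint of the allowed slope interval in Lemma \ref{le:AR} becomes $|1/\lambda_R^u| = 1/|\lambda_R^u|$, matching the statement of Lemma \ref{le:ARinverse}. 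Lemma \ref{le:AR} then gives that $\Psi_K$ is invariant and expanding for $B_R$ with expansion factor
\begin{equation}
\min \left\{ \frac{1}{|\lambda_R^s|},\; \frac{1 + |\lambda_R^s|}{\sqrt{2}\,|\lambda_R^s|} \right\} = \hat{c}_R.
\nonumber
\end{equation}

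Finally I would transfer back. Left multiplication by $P$ is reflection about the line $y = x$; it sends $\begin{bmatrix} 1 \\ m \end{bmatrix}$ to $\begin{bmatrix} m \\ 1 \end{bmatrix}$, so $\hat{\Psi}_K = P \,\Psi_K$, and being a linear isometry it preserves norms. Hence $\hat{\Psi}_K$ is invariant and expanding for $A_R^{-1} = P^{-1} B_R P$ with the same expansion factor $\hat{c}_R$. The one place where I would be careful is the sign bookkeeping for the eigenvalues of $B_R$: the expanding eigenvalue of $B_R$ is $1/\lambda_R^s$ (whose magnitude is $1/|\lambda_R^s|$), not $1/\lambda_R^u$, and this has to be tracked when reading off $\hat{c}_R$ from Lemma \ref{le:AR}. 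Beyond this correspondence the argument is essentially a direct transcription of the proof of Lemma \ref{le:ALinverse}.
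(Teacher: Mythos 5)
Your proof is correct and is essentially the paper's own approach: the paper omits a proof of this lemma but demonstrates exactly this conjugation-by-$P$ trick for the companion result on $A_L^{-1}$ (Lemma \ref{le:ALinverse}), and your adaptation to $A_R^{-1}$ — including the careful bookkeeping that the expanding eigenvalue of $B_R$ is $1/\lambda_R^s$ while the stable one is $1/\lambda_R^u$, which yields the interval endpoint $1/|\lambda_R^u|$ and the factor $\hat{c}_R$ — is accurate. No gaps.
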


\begin{lemma}
Let $\xi \in \Phi$ and suppose $J_2(\xi) < 1$.
The set
\begin{equation}
\Xi = \left\{ (x,y) \in \mathbb{R}^2 \,\middle|\, \| f_\xi^{-i}(x,y) \| \to \infty ~\text{as $i \to \infty$} \right\}.
\label{eq:Xi}
\end{equation}
is dense in $\mathbb{R}^2$.
\label{le:XiDense}
\end{lemma}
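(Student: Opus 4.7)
My plan is to show that every open ball $B \subset \mathbb{R}^2$ meets $\Xi$, by exhibiting an explicit $P \in B$ whose backward orbit diverges. The engine is the invariant expanding cone $\hat{\Psi}_K$ for $f_\xi^{-1}$ from Lemmas~\ref{le:ALinverse} and \ref{le:ARinverse}, taken with $K = [-1/\lambda_L^u,\, 1/|\lambda_R^u|]$ so that both $A_L^{-1}$ and $A_R^{-1}$ preserve $\hat{\Psi}_K$. The expansion factors $\hat{c}_L, \hat{c}_R$ satisfy $1/\hat{c}_L + 1/\hat{c}_R = J_2(\xi) < 1$ by hypothesis, so in particular $\hat{c}_L \hat{c}_R/(\hat{c}_L + \hat{c}_R) = 1/J_2(\xi) > 1$.

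First, I select a line segment $\alpha_0 \subset B$ with tangent in $\hat{\Psi}_K$. Then I define $\alpha_n$ recursively: if $\alpha_{n-1}$ does not cross $f_\xi(\Sigma)$ set $\alpha_n = f_\xi^{-1}(\alpha_{n-1})$, otherwise let $\alpha_n$ be the longer of the two pieces of $f_\xi^{-1}(\alpha_{n-1})$. Applying Lemma~\ref{le:longestLength} to $f_\xi^{-1}$ in the splitting case (with expansions $\hat{c}_L, \hat{c}_R$), and using the plain cone-expansion factors $\min(\hat{c}_L, \hat{c}_R) > 1/J_2(\xi)$ in the non-splitting case, yields $|\alpha_n| \ge (1/J_2(\xi))^n |\alpha_0|$, which tends to infinity. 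Each $\alpha_n$ lies entirely in one half-plane of $\Sigma$ by construction; a straightforward induction then shows $f_\xi^k(\alpha_n) \subset \alpha_{n-k}$ also lies in a single half-plane of $\Sigma$ for all $0 \le k \le n$, so $f_\xi^n|_{\alpha_n}$ is an affine homeomorphism onto a sub-segment $S_n \subset \alpha_0$, and the $S_n$ form a decreasing nested family. For any $P \in \bigcap_n S_n$ we have $f_\xi^{-n}(P) \in \alpha_n$ for every $n$.

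The hard part is that the mere growth of $|\alpha_n|$ does not by itself force the specific point $f_\xi^{-n}(P)$ to escape to infinity inside the long segments $\alpha_n$. My plan is to resolve this as follows. For every line segment $\alpha_n$ of length $L_n$, at least one endpoint has norm $\ge L_n/2$, call it the far endpoint; tracing these far endpoints back through the construction yields a sequence of candidate preimages in $\alpha_0$. A Cantor-style diagonal selection will then produce $P \in \bigcap_n S_n$ whose image $f_\xi^{-n}(P)$ stays within bounded parameter distance of the far endpoint of $\alpha_n$, so that $\|f_\xi^{-n}(P)\| \ge L_n/2 - O(1) \to \infty$. Executing this selection carefully---in particular showing that the other endpoint of $\alpha_n$ (the $f_\xi^{-1}$-preimage of an $x$-axis split point, which lies on $\Sigma$) grows sub-exponentially in $n$ compared with $(1/J_2(\xi))^n$---is the crux of the argument, and I expect it to rely on the cone direction constraining the $y$-coordinates of the iterated preimages on $\Sigma$.
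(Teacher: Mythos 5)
Your construction of the expanding segments is exactly the paper's: the cone $\hat{\Psi}_K$ with $K=\left[-\tfrac{1}{\lambda_L^u},\tfrac{1}{|\lambda_R^u|}\right]$, the recursion that keeps the longest piece of $f_\xi^{-1}(\alpha_{n-1})$ when it splits across $f_\xi(\Sigma)$, and the growth $|\alpha_n|\ge \left(1/J_2(\xi)\right)^n|\alpha_0|$ via the analogue of Lemma \ref{le:longestLength}; your identity $\tfrac{1}{\hat{c}_L}+\tfrac{1}{\hat{c}_R}=J_2(\xi)$ is the same one the paper uses. Your observation that each $\alpha_n$ ($n\ge 1$) lies in a single closed half-plane of $\Sigma$, so that $f_\xi^n|_{\alpha_n}$ is affine and the sets $S_n=f_\xi^n(\alpha_n)$ are nested compact sub-segments of $\alpha_0$, is correct and is a useful sharpening that the paper leaves implicit.

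The difficulty is that you stop exactly where the work begins. Everything after ``the hard part is'' is a plan, not an argument: the ``Cantor-style diagonal selection'' is never executed, and the route you sketch is not obviously viable. Tracking the far endpoint of $\alpha_n$ does not produce nested data, because that endpoint need not survive the next split (it may lie on the discarded piece), and nothing in the construction controls where inside the very long segment $\alpha_n$ the surviving point $f_\xi^{-n}(P)$, $P\in\bigcap_n S_n$, sits --- it could a priori stay near the origin while $|\alpha_n|\to\infty$. The claim that the $\Sigma$-endpoint grows sub-exponentially is asserted, not proved, and it is not clear it would suffice even if true. To be fair, you have put your finger on a point the paper's own proof also passes over in one sentence (``Thus there exists $\tilde{Q}\in\alpha_0$\dots''). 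Two ways to close the gap: (i) for the only application of this lemma (Lemma \ref{le:Lambda}) it is enough that points with \emph{unbounded} backward orbit are dense, and that follows cleanly by Baire category, since for each $M$ and $N$ the set of $P$ with $\|f_\xi^{-i}(P)\|>M$ for some $i\ge N$ is open and is dense because some $\alpha_i$ with $i\ge N$ has length exceeding $2M$ and hence an endpoint of norm exceeding $M$; (ii) to get the stated $\|f_\xi^{-i}\|\to\infty$ one can work on $\bigcap_n S_n$ when it is non-degenerate, where the uniform expansion of $f_\xi^{-i}$ by $c^i$ with $c=\min\{\hat{c}_L,\hat{c}_R\}>1$ makes $\{P:\|f_\xi^{-i}(P)\|\le M\}$ an interval of length $O(M/c^i)$ and a Borel--Cantelli estimate finishes, though the degenerate case still needs separate treatment. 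As submitted, the proof is incomplete at its own declared crux.
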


\begin{proof}
Choose any $\tilde{P} \in \mathbb{R}^2$ and $\ee > 0$.
It remains for us to show $B_\ee(\tilde{P}) \cap \Xi \ne \varnothing$.

Let $\hat{K} = \left[ -\frac{1}{\lambda_L^u}, \frac{1}{|\lambda_R^u|} \right]$.
By Lemmas \ref{le:ALinverse} and \ref{le:ARinverse}, $\hat{K}$ is invariant for $A_L^{-1}$ and $A_R^{-1}$
with expansion factors $\hat{c}_L$ and $\hat{c}_R$ given above.
Notice $J_2(\xi) < 1$ is equivalent to
\begin{equation}
\frac{1}{\hat{c}_L} + \frac{1}{\hat{c}_R} < 1.
\label{eq:XiDenseProof10}
\end{equation}
Let $\alpha_0 \subset B_\ee(\tilde{P})$ be a line segment with slope in $\hat{K}$.
For each $i \ge 0$,
let $\alpha_{i+1} = f_\xi^{-1}(\alpha_i)$ if this is a line segment,
otherwise let $\alpha_{i+1}$ be the longest line segment in $f_\xi^{-1}(\alpha_i)$.
Each $\alpha_i$ has slope in $\hat{K}$ by invariance.
Analogous to \eqref{eq:longestLength}, we have $|\alpha_{i+1}| \ge d |\alpha_i|$ for all $i \ge 0$, where
\begin{equation}
d = \frac{\hat{c}_L \hat{c}_R}{\hat{c}_L + \hat{c}_R}.
\nonumber
\end{equation}
But $d > 1$ by \eqref{eq:XiDenseProof10}, thus $|\alpha_i| \to \infty$ as $i \to \infty$.
Thus there exists $\tilde{Q} \in \alpha_0$ with $\| f_\xi^{-i}(\tilde{Q}) \| \to \infty$ as $i \to \infty$,
so $\tilde{Q} \in B_\ee(\tilde{P}) \cap \Xi$.
\end{proof}

\section{Devaney chaos}
\label{sec:Devaney}
\setcounter{equation}{0}

In this section we work towards a proof of Theorem \ref{th:DevaneyChaos}.

\begin{figure}[b!]
\begin{center}
\includegraphics[width=8cm]{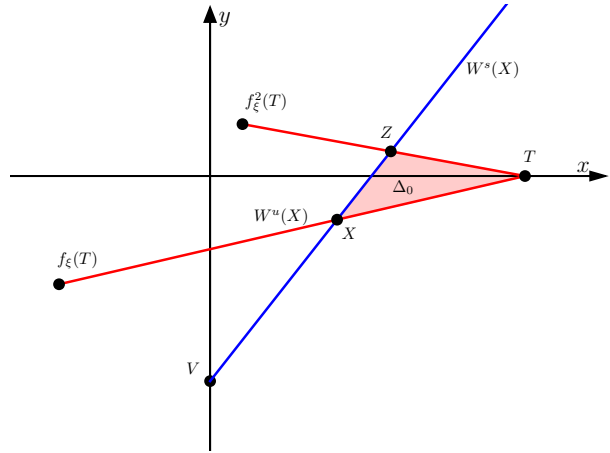}
\caption{
A sketch of the phase space of \eqref{eq:f} with $\xi \in \Phi$ and $\phi(g(\xi)) < 0$
illustrating the definition of $\Delta_0$ (shaded).
\label{fig:zd_Delta0}
} 
\end{center}
\end{figure}

As in the proof of Lemma \ref{le:Zexists}, let $T \in f_\xi(\Sigma)$ denote the first kink of $W^u(X)$.
But now suppose $f_\xi^2(T)$ lies to the left of the stable subspace $E^s(X)$.
As shown in \cite{GhSi21} this assumption is equivalent to $\phi(g(\xi)) < 0$,
so by Lemma \ref{le:Zexists} occurs when $J_1(\xi) > 1$.
Let $Z$ denote the intersection of $E^s(X)$ with $\lineSeg{T}{f_\xi^2(T)}$
and let $\Delta_0$ be the filled triangle with vertices $X$, $T$, and $Z$, see Fig.~\ref{fig:zd_Delta0}.
Also let
\begin{align}
\Delta &= \bigcup_{i=0}^\infty f_\xi^i(\Delta_0), &
\tilde{\Delta} &= \bigcap_{i=0}^\infty f_\xi^i(\Delta).
\end{align}
We first prove the following lemma that extends a result of \cite{GlSi21} to a wider range of parameter values.

\begin{lemma}
Let $\xi \in \Phi_{\rm BYG}$ and suppose $J_1(\xi) > 1$ and $J_2(\xi) < 1$.
Then $\Lambda = \tilde{\Delta}$.
\label{le:Lambda}
\end{lemma}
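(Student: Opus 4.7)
The plan is to prove the two inclusions $\Lambda \subseteq \tilde{\Delta}$ and $\tilde{\Delta} \subseteq \Lambda$ separately.

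For $\Lambda \subseteq \tilde{\Delta}$, first note that $\Delta$ is forward invariant under $f_\xi$, since $f_\xi(\Delta) = \bigcup_{i \geq 1} f_\xi^i(\Delta_0) \subseteq \Delta$. The segment $\lineSeg{X}{T}$ is a side of $\Delta_0$ and lies on $W^u(X)$. Because $\lambda_R^u < 0$, the unstable manifold has a single dynamically independent branch and satisfies $W^u(X) = \bigcup_{i \geq 0} f_\xi^i(\lineSeg{X}{T}) \subseteq \Delta$. Since $W^u(X)$ is $f_\xi$-invariant, $W^u(X) = f_\xi^i(W^u(X)) \subseteq f_\xi^i(\Delta)$ for each $i \geq 0$, hence $W^u(X) \subseteq \tilde{\Delta}$. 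Because $\Delta_0 \subseteq \Omega$ and $\Omega$ is compact and forward invariant by Lemma \ref{le:fOmega}, $\Delta \subseteq \Omega$ is bounded; replacing $\Delta$ by $\mathrm{cl}(\Delta)$ in the definition of $\tilde{\Delta}$ leaves the intersection unchanged and makes each $f_\xi^i(\mathrm{cl}(\Delta))$ compact, so $\tilde{\Delta}$ is closed. Therefore $\Lambda = \mathrm{cl}(W^u(X)) \subseteq \tilde{\Delta}$.

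For $\tilde{\Delta} \subseteq \Lambda$, the plan is to use the expansion of $f_\xi^{-1}$ provided by $J_2(\xi) < 1$ to show $W^u(X)$ is dense in $\tilde{\Delta}$. Fix $P \in \tilde{\Delta}$ and $\ee > 0$; it suffices to produce a point of $W^u(X)$ inside $B_\ee(P)$. Pick a short line segment $\alpha_0 \subseteq B_\ee(P)$ with slope in $\hat{K} = \left[ -\frac{1}{\lambda_L^u}, \frac{1}{|\lambda_R^u|} \right]$ and iterate backward exactly as in the proof of Lemma \ref{le:XiDense}: if $f_\xi^{-1}(\alpha_i)$ is a single segment, set $\alpha_{i+1} = f_\xi^{-1}(\alpha_i)$; otherwise take its longer piece. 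Lemmas \ref{le:ALinverse}, \ref{le:ARinverse}, and the $f_\xi^{-1}$-analogue of Lemma \ref{le:longestLength} then give $|\alpha_{i+1}| \geq d |\alpha_i|$ with $d > 1$, so $|\alpha_n| \to \infty$. The next step is to argue, in the spirit of Lemma \ref{le:intersectionWithBothAxes} but for backward iteration, that $\alpha_n$ must eventually cross the edge $\lineSeg{X}{T}$ of $W^u(X)$ transversally at some point $S$. Applying $f_\xi^n$ to $S$ yields $f_\xi^n(S) \in W^u(X) \cap B_\ee(P)$, placing $P$ in $\mathrm{cl}(W^u(X)) = \Lambda$.

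The hardest step will be the final geometric claim in the second inclusion. In contrast to Lemma \ref{le:intersectionWithBothAxes}, the segments $\alpha_n$ are \emph{not} a priori confined to the compact region $\Omega$ --- backward iteration can, and generally will, leave $\Delta$ --- so one cannot simply invoke boundedness to force an intersection with $\lineSeg{X}{T}$. The argument will instead have to track $\alpha_n$ inside a larger region that is invariant under $f_\xi^{-1}$, and exploit the relative positions of $X$, $T$, $Z$, the eigenspace $E^u(X)$, and the switching manifold together with the slope constraint imposed by $\hat{K}$ to show that once $|\alpha_n|$ is large enough the segment cannot avoid $\lineSeg{X}{T}$. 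Here $\xi \in \Phi_{\rm BYG}$ and $J_1(\xi) > 1$ will enter in an essential way, likely by ruling out degenerate configurations in which $\alpha_n$ runs parallel to $\lineSeg{X}{T}$ without crossing it.
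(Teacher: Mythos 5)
Your first inclusion, $\Lambda \subseteq \tilde{\Delta}$, follows the paper in substance: $W^u(X) = \bigcup_{i\ge 0} f_\xi^i(\lineSeg{X}{T})$ lies in $\Delta$ and is invariant, hence lies in every $f_\xi^i(\Delta)$. (Your claim that replacing $\Delta$ by ${\rm cl}(\Delta)$ ``leaves the intersection unchanged'' is unjustified as stated --- an intersection of closures can be strictly larger than the intersection of the sets --- but this is a repairable technicality; the paper instead passes through $\Lambda\subset\Delta$ and the invariance of $\Lambda$.)

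The second inclusion is where the proposal breaks down, and the gap you flag at the end is not one your strategy can close. You want to iterate a small segment $\alpha_0 \subset B_\ee(P)$ \emph{backwards}, let it grow via Lemmas \ref{le:ALinverse} and \ref{le:ARinverse}, and force it to cross $\lineSeg{X}{T}$. But there is no bounded region invariant under $f_\xi^{-1}$ in which to trap the $\alpha_n$: Lemma \ref{le:XiDense} says precisely that the set $\Xi$ of points with divergent backward orbits is dense, so $\alpha_0$ is riddled with points whose backward iterates escape to infinity, and the long segments $\alpha_n$ will generically end up far from $X$ and $T$ with nothing compelling them to return and meet $\lineSeg{X}{T}$. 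The hypothesis $P\in\tilde{\Delta}$ does not help, because $\tilde{\Delta}$ turns out to have empty interior and so constrains only $P$ itself, not the rest of $\alpha_0$. The paper turns this apparent defect into the proof: since $\tilde{\Delta}$ is bounded and backward invariant, $\tilde{\Delta}\cap\Xi=\varnothing$, and the density of $\Xi$ then forces $\tilde{\Delta}$ to have empty interior, i.e.\ $\tilde{\Delta}=\partial\tilde{\Delta}$. Separately, $\partial\tilde{\Delta}\subset\Lambda$ is obtained by tracking boundaries: $\partial f_\xi^i(\Delta_0)\subset \lineSeg{X}{f_\xi^i(Z)}\cup W^u(X)$ with $f_\xi^i(Z)\to X$, so the part of the boundary not already on $W^u(X)$ collapses onto $X\in\Lambda$. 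You need this boundary-plus-empty-interior decomposition (or some genuinely new idea); the backward-segment-growth argument, as proposed, does not go through.
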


\begin{proof}
Here we write $\partial F$ for the boundary of a set $F \subset \mathbb{R}^2$.

By definition, $\partial \Delta_0 \subset \lineSeg{X}{Z} \cup W^u(X)$.
Consequently $\partial f_\xi^i(\Delta_0) \subset \lineSeg{X}{f_\xi^i(Z)} \cup W^u(X)$ for all $i \ge 0$.
Thus $\partial \Delta \subset \lineSeg{Z}{f_\xi(Z)} \cup W^u(X)$
and so $\partial f_\xi^i(\Delta) \subset \lineSeg{f_\xi^i(Z)}{f_\xi^{i+1}(Z)} \cup W^u(X)$ for all $i \ge 0$.
Therefore $\partial \tilde{\Delta} \subset \Lambda$ because $f_\xi^i(Z) \to X$.
In view of its definition, $\tilde{\Delta}$ is backwards invariant under $f_\xi$.
Thus $\tilde{\Delta} \cap \Xi = \varnothing$, where $\Xi$ is the set \eqref{eq:Xi}.
But $\Xi$ is dense in $\mathbb{R}^2$ (Lemma \ref{le:XiDense}),
thus $\tilde{\Delta} = \partial \tilde{\Delta}$.
Hence $\tilde{\Delta} \subset \Lambda$.

To prove $\Lambda \subset \tilde{\Delta}$, choose any $P \in \Lambda$.
Let $\{ P^{(k)} \}$ be a sequence of points in $W^u(X)$ with $P^{(k)} \to P$ as $k \to \infty$.
For each $k$ we have $f_\xi^i(P^{(k)}) \to X$ as $i \to -\infty$,
so there exists $i_k \le 0$ such that $f_\xi^{i_k}(P^{(k)}) \in \lineSeg{X}{T} \subset \Delta_0$.
Thus $P^{(k)} \in \Delta$ for all $k$, so $P \in \Delta$.
Thus $\Lambda \subset \Delta$.
But $\Lambda$ is forward invariant under $f_\xi$, thus $\Lambda \subset \tilde{\Delta}$.
\end{proof}

\begin{proof}[Proof of Theorem \ref{th:DevaneyChaos}]
Observe $W^u(X) = \bigcup_{i \ge 0} f_\xi^i \left( \lineSeg{X}{T} \right) \setminus \{ X \}$.
The line segment $\lineSeg{X}{T}$ has slope $|\lambda_R^s|$ which belongs to the interval $K = \left[ -\lambda_L^s, |\lambda_R^s| \right]$.
By Lemma \ref{le:K} the cone $\Psi_K$ is invariant for both $A_L$ and $A_R$,
thus each $f_\xi^i \left( \lineSeg{X}{T} \right)$ is a union of line segments with slopes in $K$.

Thus for any $\tilde{P} \in \Lambda$ and $\ee > 0$,
there exists a line segment $\alpha \subset W^u(X) \cap B_\ee(\tilde{P})$ with slope in $K$.
By Lemma \ref{le:intersectionWithBothAxes}, there exists $n_1 \ge 1$
such that $f_\xi^{n_1}(\alpha)$ transversally intersects $W^s(X)$ at some point $S$.
Arbitrarily close to $S$ there exists a non-wandering set associated with a Smale horseshoe \cite{Sm67}.
In the non-wandering set periodic points of $f_\xi$ are dense.
Thus $f_\xi$ has a periodic point $P_{\rm per} \in B_\ee(\tilde{P})$ with $P_{\rm per} \in \Lambda$
because $W^u(X)$ is also dense in the non-wandering set.
This shows that periodic points of $f_\xi$ are dense in $\Lambda$.

Given any $\tilde{Q} \in \Lambda$, the Lambda Lemma \cite{AlSa97,PaDe82} implies
there exists a point $M \in f_\xi^{n_1}(\alpha)$ such that the forward orbit of $M$ under $f_\xi$ eventually enters $B_\ee(\tilde{Q})$.
Since $f_\xi^{-n_1}(M) \in B_\ee(\tilde{P})$, this shows that $f_\xi$ is transitive on $\Lambda$.
Lastly, $f_\xi$ exhibits sensitive dependence on $\Lambda$ by the result of \cite{BaBr92}.
\end{proof}

\section{Discussion}
\label{sec:conc}
\setcounter{equation}{0}

Planar maps are useful for explaining the dynamical behaviour of a wide range of physical systems.
For instance they arise as stroboscopic maps of periodically forced one-degree-of-freedom oscillators.
Planar piecewise-linear maps are perhaps the simplest class of nonlinear planar maps
and provide a useful test-bed for exploring theoretical aspects of chaos.
They arise in applications as approximations near grazing bifurcations of oscillators with stick-slip friction
and explain how friction can induce chaotic dynamics in a robust fashion \cite{DiBu08}.

The results here are obtained through a series of geometric constructions.
In particular we used the condition $J_1(\xi) > 1$
to show that line segments grow sufficiently quickly when iterated under the map.
It remains to identify different and presumably more powerful methods
to verify the conjecture that $J_1(\xi) > 1$
can in fact be replaced by the weaker condition $\phi(g(\xi)) < 0$.

\section*{Acknowledgements}

The authors were supported by Marsden Fund contract MAU1809,
managed by Royal Society Te Ap\={a}rangi.

{\footnotesize

\begin{thebibliography}{10}

\bibitem{ZeSp12}
E.~Zeraoulia and J.C. Sprott.
\newblock {\em Robust Chaos and its Applications.}
\newblock World Scientific, Singapore, 2012.

\bibitem{Va10}
S.~van Strien.
\newblock One-parameter families of smooth interval maps: {D}ensity of
  hyperbolicity and robust chaos.
\newblock {\em Proc. Amer. Math. Soc.}, 138(12):4443--4446, 2010.

\bibitem{GoGo18}
A.S. Gonchenko, S.V. Gonchenko, A.O. Kazakov, and A.D. Kozlov.
\newblock Elements of contemporary theory of dynamical chaos: {A} tutorial.
  {P}art {I}. {P}seudohyperbolic attractors.
\newblock {\em Int. J. Bifurcation Chaos}, 28(11):1830036, 2018.

\bibitem{GoKa21}
S.~Gonchenko, A.~Kazakov, and D.~Turaev.
\newblock Wild pseudohyperbolic attractor in a four-dimensional {L}orenz
  system.
\newblock {\em Nonlinearity}, 34:2018--2047, 2021.

\bibitem{GuWi79}
J.~Guckenheimer and R.F. Williams.
\newblock Structural stability of {L}orenz attractors.
\newblock {\em Publ. Math. IHES}, 50:59--72, 1979.

\bibitem{Tu99}
W.~Tucker.
\newblock The {L}orenz attractor exists.
\newblock {\em C. R. Acad. Sci. Paris}, 328:1197--1202, 1999.

\bibitem{GlSi20b}
P.A. Glendinning and D.J.W. Simpson.
\newblock Robust chaos and the continuity of attractors.
\newblock {\em Trans. Math. Appl.}, 4(1):tnaa002, 2020.

\bibitem{AlPu17}
J.F. Alves, A.~Pumari\~{n}o, and E.~Vigil.
\newblock Statistical stability for multidimensional piecewise expanding maps.
\newblock {\em Proc. Amer. Math. Soc.}, 145(7):3057--3068, 2017.

\bibitem{BaYo98}
S.~Banerjee, J.A. Yorke, and C.~Grebogi.
\newblock Robust chaos.
\newblock {\em Phys. Rev. Lett.}, 80(14):3049--3052, 1998.

\bibitem{DiBu08}
M.~di~Bernardo, C.J. Budd, A.R. Champneys, and P.~Kowalczyk.
\newblock {\em Piecewise-smooth Dynamical Systems. Theory and Applications.}
\newblock Springer-Verlag, New York, 2008.

\bibitem{Si16}
D.J.W. Simpson.
\newblock Border-collision bifurcations in $\mathbb{R}^n$.
\newblock {\em SIAM Rev.}, 58(2):177--226, 2016.

\bibitem{ZhMo08b}
Z.T. Zhusubaliyev and E.~Mosekilde.
\newblock Equilibrium-torus bifurcation in nonsmooth systems.
\newblock {\em Phys. D}, 237:930--936, 2008.

\bibitem{SzOs08}
R.~Szalai and H.M. Osinga.
\newblock Invariant polygons in systems with grazing-sliding.
\newblock {\em Chaos}, 18(2):023121, 2008.

\bibitem{NuYo92}
H.E. Nusse and J.A. Yorke.
\newblock Border-collision bifurcations including ``period two to period
  three'' for piecewise smooth systems.
\newblock {\em Phys. D}, 57:39--57, 1992.

\bibitem{GhSi21}
I.~Ghosh and D.J.W. Simpson.
\newblock Renormalisation of the two-dimensional border-collision normal form.
\newblock \texttt{arXiv:2109.09242}, 2021.

\bibitem{GlSi21}
P.A. Glendinning and D.J.W. Simpson.
\newblock A constructive approach to robust chaos using invariant manifolds and
  expanding cones.
\newblock {\em Discrete Contin. Dyn. Syst.}, 41(7):3367--3387, 2021.

\bibitem{De89}
R.L. Devaney.
\newblock {\em An Introduction to Chaotic Dynamical Systems.}
\newblock Addison-Wesley, New York, 2nd edition, 1989.

\bibitem{Mi80}
M.~Misiurewicz.
\newblock Strange attractors for the {L}ozi mappings.
\newblock In R.G. Helleman, editor, {\em Nonlinear dynamics, Annals of the New
  York Academy of Sciences}, pages 348--358, New York, 1980. Wiley.

\bibitem{Sa99b}
E.A. Sataev.
\newblock Ergodic properties of the {B}elykh map.
\newblock {\em J. Math. Sci.}, 95(5):2564--2575, 1999.

\bibitem{HiKr18}
S.~Hittmeyer, B.~Krauskopf, H.M. Osinga, and K.~Shinohara.
\newblock Existence of blenders in a {H}\'{e}non-like family: geometric
  insights from invariant manifold computations.
\newblock {\em Nonlinearity}, 31(10):R239--R267, 2018.

\bibitem{BoDi05}
C.~Bonatti, L.J. D\'{\i}az, and M.~Viana.
\newblock {\em Dynamics Beyond Uniform Hyperbolicity.}
\newblock Springer, New York, 2005.

\bibitem{BaBr92}
J.~Banks, J.~Brooks, G.~Cairns, G.~Davis, and P.~Stacey.
\newblock On {D}evaney's definition of chaos.
\newblock {\em Amer. Math. Monthly}, 99(4), 1992.

\bibitem{Si16b}
D.J.W. Simpson.
\newblock Unfolding homoclinic connections formed by corner intersections in
  piecewise-smooth maps.
\newblock {\em Chaos}, 26:073105, 2016.

\bibitem{PaTa93}
J.~Palis and F.~Takens.
\newblock {\em Hyperbolicity and sensitive chaotic dynamics at homoclinic
  bifurcations.}
\newblock Cambridge University Press, New York, 1993.

\bibitem{GrOt83}
C.~Grebogi, E.~Ott, and J.A. Yorke.
\newblock Crises, sudden changes in chaotic attractors, and transient chaos.
\newblock {\em Phys. D}, 7:181--200, 1983.

\bibitem{Os06}
H.M. Osinga.
\newblock Boundary crisis bifurcation in two parameters.
\newblock {\em J. Diff. Eq. Appl.}, 12(10):997--1008, 2006.

\bibitem{VeGl90}
D.~Veitch and P.~Glendinning.
\newblock Explicit renormalisation in piecewise linear bimodal maps.
\newblock {\em Phys. D}, 44:149--167, 1990.

\bibitem{Sm67}
S.~Smale.
\newblock Differentiable dynamical systems.
\newblock {\em Bull. Amer. Math. Soc.}, 73:747--817, 1967.

\bibitem{AlSa97}
K.T. Alligood, T.D. Sauer, and J.A. Yorke.
\newblock {\em Chaos. An Introduction to Dynamical Systems.}
\newblock Springer, New York, 1997.

\bibitem{PaDe82}
J.~Palis and W.~de~Melo.
\newblock {\em Geometric Theory of Dynamical Systems. {A}n Introduction.}
\newblock Springer-Verlag, New York, 1982.

\end{thebibliography}

}

\end{document}